\crefname{section}{§}{§§}
\Crefname{section}{§}{§§}
\DeclarePairedDelimiter\floor{\lfloor}{\rfloor}
\newtheorem{theorem}{Theorem}[section]
\newtheorem{lemma}[theorem]{Lemma}
\newtheorem{proposition}[theorem]{Proposition}
\newtheorem{corollary}[theorem]{Corollary}
\newtheorem{conj}[theorem]{Conjecture}
\theoremstyle{definition}
\newtheorem{definition}[theorem]{Definition}
\newtheorem{example}[theorem]{Example}
\theoremstyle{remark}
\newtheorem{remark}[theorem]{Remark}
\numberwithin{equation}{section}
\DeclareMathAlphabet{\mathpzc}{OT1}{pzc}{m}{it}
\DeclareMathOperator{\sheafHom}{\mathscr{H}\text{\kern -3pt {\calligra\large om}}\,}
\begin{document}

\title{Towards the Generalized Purely Wild Inertia Conjecture for product of Alternating and Symmetric Groups}
\author{Soumyadip Das}
\address{Mathematics,
Chennai Mathematical Institute, H1, SIPCOT IT Park, Siruseri, Kelambakkam 603103, India.}
\email{soumyadipd@cmi.ac.in}

\subjclass[2020]{ 14H30, 14G17 (Primary) 13B05, 11S15, 14B20 (Secondary)}

\keywords{Galois covers of curves, Inertia Conjecture, ramification, formal patching}

\begin{abstract}
We obtain new evidence for the Purely Wild Inertia Conjecture posed by Abhyankar and for its generalization. We show that this generalized conjecture is true for any product of simple Alternating groups in odd characteristics, and for any product of certain Symmetric or Alternating groups in characteristic two. We also obtain important results towards the realization of the inertia groups which can be applied to more general set up. We further show that the Purely Wild Inertia Conjecture is true for any product of perfect quasi $p$-groups (groups generated by their Sylow $p$-subgroups) if the conjecture is established for individual groups. 
\end{abstract}

\maketitle

\section{Introduction}
The objective of this article is to establish the Generalized Purely Wild Inertia Conjecture for arbitrary finite product of Alternating and Symmetric groups in arbitrary prime characteristic. This is completely achieved in odd characteristics, and with conditions on the degrees of these permutation groups in even characteristic.

Let $p$ be a prime number and $k$ be an algebraically closed filed of characteristic $p$. A finite group $G$ is said to be a \textit{quasi} $p$-\textit{group} if $G$ is generated by all its Sylow $p$-subgroups. In \cite{Abh_57}, Abhyankar posed a conjecture stating that the quasi $p$-groups are precisely the groups occurring as the Galois groups for connected Galois \'{e}tale covers of the affine $k$-line $\mathbb{A}^1$. This is a sharp contrast from the behavior of finite covers defined over $\mathbb{C}$ as there is no non-trivial \'{e}tale cover of the Riemann sphere. The conjecture (known as Abhyankar's Conjecture on the affine line) was proved by Serre (for solvable quasi $p$-groups; \cite{Serre_AC}) and Raynaud (\cite{Raynaud_AC}). It can be seen that any such Galois \'{e}tale cover of $\mathbb{A}^1$ extends uniquely to a Galois cover of the projective $k$-line $\mathbb{P}^1$ that is branched over $\infty$ and is \'{e}tale everywhere else. Moreover, the inertia groups above $\infty$ are conjugate to each other. For a quasi $p$-group $G$, and a subgroup $I \, \subset \, G$, we say that the pair $(G, \, I)$ is \textit{realizable} if there is a connected $G$-Galois cover of $\mathbb{P}^1$ branched only at $\infty$, and at a point above $\infty$, the inertia group is $I$. From the theory of extension of local fields, $I$ is necessarily an extension of a $p$-group by a cyclic group of order prime-to-$p$. The question arises that given a quasi $p$-group $G$ and a subgroup $I \, \subset G$ which is potentially an inertia group (i.e., has the above necessary form), whether the pair $(G, \, I)$ is realizable. Abhyankar conjectured a necessary and sufficient group theoretic condition to this question, now known as the Inertia Conjecture.

\begin{conj}[The IC, {\cite[Section 16]{Abh_01}}]\label{conj_IC}
Let $G$ be a finite quasi $p$-group. Let $I$ be a subgroup of $G$ which is an extension of a $p$-group by a cyclic group of order prime-to $p$. Then the pair $(G, \, I)$ is realizable if and only if the conjugates of $P$ in $G$ generate $G$ (in notation, $G = \, \langle \, P^G \, \rangle$).
\end{conj}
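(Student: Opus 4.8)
The plan is to treat the two implications separately, since they are of very different character. The forward (necessity) direction is classical and I would prove it directly; the reverse (sufficiency) direction carries the deep content, and I would attack it by formal patching, which is where essentially all the difficulty lies.

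For necessity, suppose $(G,I)$ is realizable by a connected $G$-Galois cover $\phi \colon Y \to \mathbb{P}^1$ branched only at $\infty$, with inertia $I$ above $\infty$; write $I = P \rtimes C$ with $P$ its (normal) wild part and $C$ cyclic of order prime to $p$. I would set $N = \langle P^G \rangle$, a normal subgroup of $G$, and pass to the intermediate $G/N$-Galois cover $Y/N \to \mathbb{P}^1$. This cover is again branched only at $\infty$, and its inertia there is the image of $I$ in $G/N$; since $P \subseteq N$, this image is a quotient of $C$, hence cyclic of order prime to $p$, so $Y/N \to \mathbb{P}^1$ is at worst tamely ramified over $\infty$ and \'{e}tale elsewhere. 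Because a one-point-branched Galois cover of $\mathbb{P}^1$ has its group generated by a single inertia subgroup, $G/N$ is cyclic and tame, and the triviality of the tame fundamental group of $\mathbb{A}^1_k$ (equivalently, a Riemann--Hurwitz count ruling out a tame cyclic cover branched only at $\infty$) forces $G/N = 1$. Thus $G = \langle P^G\rangle$, as required.

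For sufficiency I would first reduce to the purely wild case. Granting the Purely Wild Inertia Conjecture, one realizes $(G,P)$ by a connected cover branched only at $\infty$ with wild inertia exactly $P$, and then recovers $(G, P \rtimes C)$ by a separate deformation/patching step that grafts on the tame cyclic part $C$ (for instance by composing with an auxiliary tamely ramified building block and re-patching, \`{a} la Harbater--Pries). The construction of $(G,P)$ itself I would carry out by formal or rigid patching: produce local building-block covers carrying conjugates of $P$ as inertia, use the hypothesis $G = \langle P^G\rangle$ to guarantee that these blocks generate $G$, and glue them over a formal neighborhood of $\infty$ (Harbater's thickening, or Mumford/Raynaud's semistable models) into a single connected $G$-cover of $\mathbb{P}^1$ with the prescribed ramification. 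Since the paper targets products $G = G_1 \times \cdots \times G_n$, I would then combine realizations of the factors: the naive fiber product of connected $G_i$-covers is typically disconnected with Galois group a proper subgroup of $\prod G_i$, so the real work is to twist and re-patch the factor covers so that the combined cover stays connected, has Galois group the full product, and has inertia the intended product subgroup $\prod I_i \subseteq \prod G_i$. This is presumably the role of the general realization lemmas advertised in the abstract.

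The main obstacle, and the reason the statement remains a conjecture, is the sufficiency direction: controlling wild ramification throughout the patching so that the inertia above $\infty$ is \emph{exactly} $P$ (not merely a subgroup containing or contained in it), while simultaneously guaranteeing connectedness of the glued cover. Both the higher-ramification bookkeeping for the wild part and the connectivity of fiber-product-type constructions for products are delicate, which is why I expect only conditional or case-by-case progress here rather than a uniform proof.
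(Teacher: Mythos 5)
The statement you are proving is Conjecture~\ref{conj_IC}, the Inertia Conjecture itself. It is an \emph{open conjecture}: the paper contains no proof of it, and only establishes special cases (e.g.\ the IC for $A_{p+1}$ in Corollary~\ref{cor_A_p+1}, the PWIC for $A_{rp}$ in Theorem~\ref{thm_PWIC_Alternating}, and product cases in \S\ref{sec_two}). So there is no paper proof to match, and any purported complete proof must contain a gap. Your necessity direction is essentially the standard (and correct) argument that the paper alludes to with ``the condition on $I$ in the conjecture is a necessary one,'' though one step is misstated: the Galois group of a cover of $\mathbb{P}^1$ branched only at $\infty$ is generated by \emph{all conjugates} of one inertia group (the quotient by the subgroup they generate is an everywhere \'etale cover of $\mathbb{P}^1$, hence trivial), not by a single inertia subgroup, so you cannot conclude that $G/N$ is cyclic. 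Your parenthetical rescue is the right one: $Y/N \to \mathbb{P}^1$ is a tame Galois cover branched only at $\infty$, and triviality of the tame fundamental group of $\mathbb{A}^1_k$ (or a direct Riemann--Hurwitz count) forces $G/N = 1$ without any cyclicity claim.

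The sufficiency direction is where the genuine gaps lie, and they are unavoidable. First, your argument is conditional on the PWIC (Conjecture~\ref{conj_PWIC}), which is itself open, so even if the rest worked you would only have ``PWIC $\Rightarrow$ IC,'' not the IC. Second, and more seriously, that implication is not known either: the step where you ``recover $(G, P \rtimes C)$ by grafting on the tame cyclic part $C$'' is precisely the hard content of the IC beyond the PWIC, and no available patching result supplies it in general. Harbater's theorem \cite[Theorem~2]{2} only enlarges the \emph{wild} ($p$-)part of an inertia group within a fixed Galois group; it does not attach a prime-to-$p$ tame part. The tool the paper actually uses for this, Lemma~\ref{lem_fp_main} (after Pries), requires as \emph{input} a second cover already realizing $I = \langle \tau \rangle \rtimes \langle c \rangle$ as an inertia group for some subgroup $G_2 \subseteq G$ containing $I$ --- for instance a Harbater--Katz--Gabber cover for $I$ itself, followed by a Kummer pullback to remove the auxiliary tame branch point, as in Proposition~\ref{prop_purely_wild_from_tame} --- and moreover it only applies when the wild part of $I$ is $\mathbb{Z}/p$. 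This machinery yields the case-by-case results of the paper (e.g.\ Corollary~\ref{cor_A_p+1}), not a uniform sufficiency proof; your discussion of fiber products of $G_i$-covers concerns the paper's product theorems, not the conjecture as stated. Your closing caveat is therefore accurate: what you have is a sensible research program mirroring the paper's strategy, not a proof, and it should not be presented as one.
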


It can be seen that the condition on $I$ in the conjecture is a necessary one. When $I =  P$ is a $p$-group, a special case of the above conjecture is known as the Purely Wild Inertia Conjecture (henceforth referred to as the PWIC).

\begin{conj}[The PWIC, {\cite[Section 16]{Abh_01}}]\label{conj_PWIC}
Let $G$ be a finite quasi $p$-group. Let $P$ be a $p$-subgroup of $G$. Then the pair $(G, \, P)$ is realizable if and only if $G = \langle \, P^G \, \rangle$.
\end{conj}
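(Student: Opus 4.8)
The plan is to treat the two implications of the biconditional separately, since they differ completely in difficulty.

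\emph{Necessity} ($\text{realizable} \Rightarrow G = \langle P^G\rangle$). This direction is formal, and I would dispatch it first. Suppose $(G,P)$ is realized by a connected $G$-Galois cover $Y \to \mathbb{P}^1$ branched only at $\infty$, with wild inertia $P$ above a point $y_\infty$. Let $N = \langle P^G\rangle$ be the normal closure of $P$ in $G$; then the quotient cover $Y/N \to \mathbb{P}^1$ is $(G/N)$-Galois, and its inertia above $\infty$ is the image of $P$, which is trivial because $P \subseteq N$. Hence $Y/N \to \mathbb{P}^1$ is everywhere unramified, and since $\mathbb{P}^1_k$ admits no non-trivial connected \'{e}tale cover we conclude $G/N = 1$, i.e. $G = \langle P^G\rangle$. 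This makes precise the ``can be seen'' remark preceding the statement.

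\emph{Sufficiency} ($G = \langle P^G\rangle \Rightarrow \text{realizable}$). This is the genuinely hard and, in full generality, still open direction; I would only aim to establish it for the group families targeted in this paper. The main tool is formal patching in the style of Harbater and Raynaud. For a single quasi-$p$ group $G$ with a $p$-subgroup $P$ satisfying $G=\langle P^G\rangle$, I would build the cover by gluing, along the formal boundary of a disc around $\infty$, a purely wild local cover realizing the inertia $P$ (constructed via Artin--Schreier--Witt theory, i.e. the Harbater--Katz--Gabber correspondence over $k((t))$) to auxiliary covers that supply the remaining generators of $G$; the hypothesis $G = \langle P^G\rangle$ is exactly what forces the patched cover to be connected. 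For a product $G = G_1 \times \cdots \times G_n$, I would reduce to the individual factors: given covers realizing each $(G_i, P_i)$, form a suitable fiber product or patched cover over the common branch point $\infty$ to obtain a $\bigl(\prod_i G_i\bigr)$-Galois cover with inertia $\prod_i P_i$, using perfectness of the quasi-$p$ factors to guarantee that no common Galois subcover appears and hence that the fiber product stays connected.

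The main obstacle is the sufficiency direction, and within it two points. First, one must produce the local cover at $\infty$ with inertia exactly $P$ while controlling both its wild and its prime-to-$p$ parts, and then globalize it, that is, convert the group-theoretic condition $G = \langle P^G\rangle$ into a genuine geometric connectedness statement after patching. Second, for products the subtlety is that an arbitrary $p$-subgroup of $\prod_i G_i$ need not split as $\prod_i P_i$, so the clean factorwise reduction applies cleanly to Sylow-type subgroups; controlling the inertia as it actually embeds in the product, and verifying that the generation hypothesis descends to each factor, is where the real work lies and where I expect the paper's specific analysis of Alternating and Symmetric groups to be needed.
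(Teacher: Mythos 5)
The statement you are proving is Conjecture~\ref{conj_PWIC} itself, and the paper contains no proof of it: the PWIC is open in general, and the paper only establishes special cases --- $A_{rp}$ in odd characteristic (Theorem~\ref{thm_PWIC_Alternating}), certain $A_d$ and $S_d$ in characteristic $2$ (Theorems~\ref{thm_PWIC_A_d_char_2} and~\ref{thm_PWIC_S_d_char_2}), and products of perfect quasi $p$-groups (Remark~\ref{rmk_realization_product}~\eqref{item:3}). Your necessity argument is correct and complete: quotienting a realizing cover by $N = \langle\, P^G \,\rangle$ kills every inertia group (the inertia groups above $\infty$ are the conjugates of $P$, all contained in $N$), so the quotient is a connected everywhere-\'{e}tale cover of $\mathbb{P}^1$ and hence trivial. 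This is exactly the content of the paper's unproved remark that the condition on $I$ is necessary, and that half of your write-up stands on its own.

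The sufficiency half, however, is a plan rather than a proof, and its pivotal claim would fail as stated. You write that ``the hypothesis $G = \langle P^G\rangle$ is exactly what forces the patched cover to be connected,'' but formal patching in the style of Harbater and Raynaud does not operate that way: one must first exhibit $G$ as generated by subgroups $G_i$ for which pairs $(G_i,\, I_i)$ with controlled inertia are \emph{already} realized, and no known procedure converts the normal-generation hypothesis into such a decomposition --- this is precisely why the conjecture is open. Note also that \cite[Theorem~2]{2} only permits \emph{enlarging} the inertia within a realizable pair, so the genuine difficulty is realizing small $p$-subgroups; for instance, for $\mathbb{Z}/p$ generated by a product of $r$ disjoint $p$-cycles in $A_{rp}$ the paper needs the new mechanism of Proposition~\ref{prop_purely_wild_from_tame}: enlarge the Galois group and the inertia by a prime-to-$p$ part via Lemma~\ref{lem_fp_main}, then kill the tame part by pullback along a Kummer cover using \cite[Proposition~3.5]{Manish_Killing} --- none of which appears in your sketch. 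Likewise for products, your fiber-product construction only yields inertia of the split form $\pi_1(P)\times\pi_2(P)$, whereas a general $p$-subgroup is a Goursat fiber product $\pi_1(P)\times_Q\pi_2(P)$; you correctly flag this obstacle but do not resolve it, while the paper's Remark~\ref{rmk_realization_product}~\eqref{item:3} resolves it by invoking \cite[Corollary~4.6]{Manish_Compositum} to realize the pairs $(G_1\times\pi_2(P),\, P)$ and $(\pi_1(P)\times G_2,\, P)$ and then matching the two local $P$-Galois extensions at $\infty$ via \cite[Lemma~4.6]{Das} before patching. In sum: necessity proved, sufficiency not proved, and no complete proof of the statement exists --- in the paper or anywhere else.
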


It is known that the Inertia Conjecture is true when $G$ is a $p$-group. From \cite[Theorem 2]{2}, it follows that for any quasi $p$-group and a Sylow $p$-subgroup $P$ of $G$, the pair $(G, \, P)$ is realizable. In particular, the PWIC is true for any quasi $p$-group whose order is strictly divisible by $p$. In other earlier works, the Inertia Conjecture was shown to be true for a few groups. For a summary on these works and other important developments, see \cite[Section 4]{survey_paper}. Recent works from \cite{DK}, \cite{Das}, \cite{Dean} shed light on several systematic ways to construct the Galois \'{e}tale covers of the affine line. As a consequence, a larger class of groups provide evidence towards the above conjectures. In general, the status of the conjectures remain open at the moment. In \cite{Das}, the PWIC was generalized (the Generalized Purely Wild Inertia Conjecture or GPWIC) to the case with multiple branched points in $\mathbb{P}^1$.

\begin{conj}[The GPWIC, {\cite[Conjecture 6.8]{Das}}]\label{conj_GPWIC}
Let $G$ be a finite quasi $p$-group. Let $P_1, \, \cdots, \,P_r$ be non-trivial $p$-subgroup of $G$ for some $r \geq 1$ such that $G \, = \, \langle P_1^G,\, \cdots, \, P_r^G \rangle$. Let $B = \{x_1, \, \cdots, \, x_r\}$ be a set of closed points in $\mathbb{P}^1$. Then there is a connected $G$-Galois cover of $\mathbb{P}^1$ \'{e}tale away from $B$ such that $P_i$ occurs as an inertia group above the point $x_i$ for $1 \leq i \leq r$.
\end{conj}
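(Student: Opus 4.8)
The strategy I would pursue is to assemble the desired cover by \emph{formal patching}, in the spirit of Raynaud's and Harbater's proof of Abhyankar's Conjecture: I would glue prescribed wild local data at each $x_i$ to a global \'{e}tale cover over the complement, and then use the generation hypothesis $G = \langle P_1^G, \cdots, P_r^G\rangle$ to force connectivity and the correct monodromy group. The first step is to construct the local building blocks. For each $i$, since $P_i$ is a non-trivial $p$-group and $k$ is algebraically closed, Artin--Schreier--Witt theory produces a connected $P_i$-Galois cover of the formal punctured disc around $x_i$ that is totally ramified, with purely wild inertia, over the closed point. Inducing along $P_i \subset G$ yields a $G$-Galois cover of the disc — a disjoint union of $[G : P_i]$ copies permuted by $G$, a ``mock cover'' — whose inertia over the closed point is exactly $P_i$. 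These are the local pieces at $x_1, \cdots, x_r$.

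Next I would produce the global piece and patch. Over $U = \mathbb{P}^1 \setminus B$ one needs a $G$-Galois \'{e}tale cover whose restriction to each punctured disc matches the mock cover above; by the refinement of Abhyankar's Conjecture via patching of fundamental groups, one may prescribe the branch locus $B$ and the local decomposition behaviour. Formal (or rigid) patching then glues the local and global pieces into a $G$-Galois cover $Y \to \mathbb{P}^1$ that is \'{e}tale away from $B$ and has inertia $P_i$ above $x_i$. Two conditions must be verified: that $Y$ is connected and that its monodromy group is all of $G$. Both follow from the hypothesis $G = \langle P_1^G, \cdots, P_r^G\rangle$, since the inertia subgroups already generate $G$; hence the classifying homomorphism from $\pi_1(U)$ is surjective and the cover cannot split, which is precisely the role the generation condition plays in Raynaud's argument.

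The main obstacle is exactly the compatibility forced in the patching step — matching the prescribed wild local inertia $P_i$ with a genuinely \emph{connected} global cover carrying the full group $G$. For $r = 1$ this is nothing other than the single-point PWIC (\Cref{conj_PWIC}), which is open for general pairs $(G, P)$: the assumed results give a cover of $\mathbb{A}^1$ with group $G$ and, by \cite{2}, with inertia a \emph{Sylow} $p$-subgroup, and one must then \emph{degenerate} or otherwise modify the cover to shrink the inertia down to the prescribed $P_i$ without losing connectivity or the group $G$. This ``inertia adjustment'' is not available for arbitrary $(G,P_i)$, and since the multi-point statement specializes to it at each point, the general conjecture is at least as hard; I regard this as the genuine crux rather than a routine verification.

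Finally, I would isolate the case in which $G$ decomposes as a direct product $Q_1 \times \cdots \times Q_r$ with each $Q_i = \langle P_i^{Q_i}\rangle$, since there the patching collapses to a clean construction. Taking for each $i$ a connected single-point cover $Y_i \to \mathbb{P}^1$ branched only at $x_i$ with inertia $P_i$, the fibre product $Y_1 \times_{\mathbb{P}^1} \cdots \times_{\mathbb{P}^1} Y_r$ is a $G$-Galois cover \'{e}tale away from $B$ with inertia $P_i$ above $x_i$, because $Y_j$ is unramified at $x_i$ for $j \neq i$. It is moreover connected: each pairwise intersection of the function fields is Galois over $k(\mathbb{P}^1)$ and unramified over all of $\mathbb{P}^1$, hence trivial since $\mathbb{P}^1$ is simply connected, so the compositum realizes the full product. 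This reduces the direct-product case to the single-point PWIC for the individual factors, thereby pinpointing that conjecture as the sole missing input for the whole of \Cref{conj_GPWIC}.
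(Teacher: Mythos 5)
The statement you set out to prove is a conjecture: the paper does not prove Conjecture~\ref{conj_GPWIC} in general, and neither does your proposal --- as you yourself diagnose in your third paragraph. The gap sits exactly where you place it, but it is worth being precise about why your second paragraph cannot be repaired. The ``refinement of Abhyankar's Conjecture via patching'' that would let you prescribe the exact local inertia $P_i$ at each $x_i$ does not exist; that prescription \emph{is} the conjecture. Patching in the Raynaud--Harbater style controls inertia only up to enlargement: Harbater's theorem \cite[Theorem~2]{2}, used throughout the paper, passes from a realizable pair $(G, I)$ to $(G, I')$ with $I \subset I'$, never the reverse, and there is no general ``inertia adjustment'' shrinking a Sylow $p$-subgroup down to a prescribed $P_i$. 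Moreover, your appeal to the hypothesis $G = \langle \, P_1^G, \ldots, P_r^G \, \rangle$ for both connectivity and full monodromy is circular at the decisive point: that condition is necessary and ensures a candidate cover could be connected, but it does not force the patched cover to have exactly $P_i$ as inertia rather than something larger. Since the case $r = 1$ specializes to the open Conjecture~\ref{conj_PWIC}, no argument of this shape can close.

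By way of comparison, the paper proves only special cases --- Corollary~\ref{cor_GPWIC_Alt_products} (products of $A_{d_i}$, $p$ odd) and Corollary~\ref{cor_GPWIC_arbit_product_char_2} ($p = 2$) --- and its engine is not inertia-shrinking by patching alone but a detour through larger groups: Proposition~\ref{prop_purely_wild_from_tame} first realizes a pair with inertia $\langle \, (\lambda_1, \lambda_2) \, \rangle \rtimes \langle \, c \, \rangle$ by patching the given cover with a Harbater--Katz--Gabber cover (Lemma~\ref{lem_fp_main}, after Pries \cite{Pries}), and then kills the tame part of the inertia by pullback along a Kummer cover \cite{Manish_Killing}; this yields the PWIC for $A_{rp}$ (Theorem~\ref{thm_PWIC_Alternating}). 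Your closing direct-product reduction is sound in spirit --- and the paper's overall strategy is indeed ``single-point PWIC for factors first, then products'' --- but as stated it covers only the special shape in which $r$ equals the number of factors and each $P_i$ is supported in the $i$-th factor $Q_i$. In Conjecture~\ref{conj_GPWIC} the $P_j$ are arbitrary $p$-subgroups of the product whose projections to the factors interact; the paper handles this via Goursat's lemma together with \cite[Corollary~4.6]{Manish_Compositum} and a conductor-matching step \cite[Lemma~4.6]{Das} before patching (Remark~\ref{rmk_realization_product}~\eqref{item:3}), and the multi-point bookkeeping in characteristic $2$ (Theorem~\ref{thm_product_S_d_char_2}, Step~1) likewise decomposes each $P_j$ rather than assuming it split. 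One smaller slip: for connectivity of your fibre product you need triviality of the intersection of each $k(Y_i)$ with the compositum of the \emph{others}, not merely pairwise intersections, though your unramifiedness argument (everywhere-unramified Galois over $\mathbb{P}^1$, hence trivial) does deliver that stronger statement as well.
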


When $p$ is an odd prime, $A_d$ is a quasi $p$-group for all $d \geq \text{max}\{5,p\}$; when $p = 2$, $A_d$ for $d \geq 5$ is a quasi $2$-group and $S_d$ for $d \geq 3$ is a quasi $2$-group. Moreover, any product of quasi $p$-groups is again a quasi $p$-group. This way, we have a large class of potential candidates for which the above conjectures can be checked. Our goal in this paper is to establish the above mentioned conjecture for the Alternating and Symmetric groups, and their products. 

When $p$ is an odd prime, \cite[Theorem 1.7]{Das} shows that the GPWIC is true for any product $A_{d_1} \times \cdots \times A_{d_n}$, $n \geq 1$, where each $d_i = p$ or $d_i \geq p+1$ is co-prime to $p$. In Corollary~\ref{cor_GPWIC_Alt_products} we remove this restriction on the $d_i$'s, hence proving the GPWIC for arbitrary product of simple quasi $p$ Alternating groups. In view of the proof of \cite[Theorem 7.4]{Das}, this boils down to showing the following important result.

\begin{theorem}[{Theorem \ref{thm_PWIC_Alternating}}]\label{thm_intro_PWIC_Alternating_multiple_p}
Let $p$ be an odd prime, $r \geq 1$. Then the PWIC (Conjecture~\ref{conj_PWIC}) is true for $A_{rp}$.
\end{theorem}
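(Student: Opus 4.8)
The plan is to first remove the normal-generation hypothesis using the simplicity of $A_{rp}$, and then to realize every admissible $p$-subgroup by a formal patching construction whose local data are supplied by the Sylow case \cite{2} together with the known validity of the Inertia Conjecture for $p$-groups.

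\textbf{Reduction of the hypothesis.} Since $p$ is odd, the group $A_{rp}$ is simple in every case except $(p,r)=(3,1)$, where $A_3\cong \mathbb{Z}/3$ has $A_3$ itself as its only nontrivial $p$-subgroup (which is Sylow, so this case follows at once from \cite{2} or from \cite[Theorem 1.7]{Das} with $d=p$). In the simple cases, for any nontrivial $p$-subgroup $P\le A_{rp}$ the normal closure $\langle P^{A_{rp}}\rangle$ is a nontrivial normal subgroup, hence equals $A_{rp}$; so the hypothesis $G=\langle P^G\rangle$ is automatic. As the necessity of the local form of $P$ is the standard constraint from the theory of local extensions, the entire content of the statement becomes: \emph{for every nontrivial $p$-subgroup $P$ of $A_{rp}$, the pair $(A_{rp},P)$ is realizable}.

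\textbf{Organization of the $p$-subgroups and the anchoring cases.} I would stratify the $p$-subgroups of $A_{rp}$ by their orbit decomposition on $\{1,\dots,rp\}$, all orbits having $p$-power length. Since $P$ lies in some Sylow $p$-subgroup $S$ and every subgroup of a finite $p$-group is subnormal, there is a chain $P=Q_0\trianglelefteq Q_1\trianglelefteq\cdots\trianglelefteq Q_n=S$ with $[Q_{i+1}:Q_i]=p$. The top of this chain, $(A_{rp},S)$, is realizable by \cite[Theorem 2]{2}, so one natural route is a \emph{descent step}: from a realization with inertia $Q_{i+1}$ produce one with inertia the index-$p$ normal subgroup $Q_i$. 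The route I would actually carry out is constructive rather than by descent: writing $P=\langle c_1,\dots,c_m\rangle$ with each $c_j$ a $p$-element, I would build, by formal patching, a connected $A_{rp}$-Galois cover of $\mathbb{P}^1$ realizing the cyclic wild inertia groups $\langle c_j\rangle$ near chosen points together with auxiliary (tame, prime-to-$p$) ramification chosen so that the global monodromy is forced to be all of $A_{rp}$; the local wild pieces are furnished by the Inertia Conjecture for the $p$-group $P$ (known), and are induced as $\Ind_{P}^{A_{rp}}$ along $\infty$. An induction on $r$ treats the $p$-subgroups fixing at least one point by embedding a cover for the smaller alternating group $A_{(r-1)p}$ (inductive hypothesis) and patching in the missing generators, while the fixed-point-free tilings by $p$-cycles — exactly the configurations that only appear once $p\mid d$ — are the genuinely new cases.

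\textbf{Assembling the cover and contracting the branch locus.} Having patched a connected $A_{rp}$-cover branched at the wild points (carrying inertia generating $P$) and at finitely many auxiliary tame points, the final step is to eliminate the auxiliary branch points and to coalesce the remaining wild points to the single point $\infty$, so that the limiting inertia at $\infty$ is precisely $\langle c_1,\dots,c_m\rangle=P$. This is carried out by the Harbater–Raynaud type contraction of the branch locus over a formal/rigid base, keeping track of the local wild inertia through the degeneration.

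\textbf{Main obstacle.} The crux is simultaneous control of two global invariants through the patching and degeneration: ensuring the monodromy is \emph{exactly} $A_{rp}$ — neither dropping to an imprimitive or intransitive subgroup nor rising to $S_{rp}$ (the even-permutation constraint must be preserved) — while pinning the inertia at $\infty$ to be \emph{exactly} $P$, with no spurious tame part and no enlargement of the wild part. The multiple-of-$p$ regime $r\ge 2$ is hard precisely here: a generating wild $p$-element may have large complementary support, so the auxiliary covers must generate the large group $A_{rp}$ "around" a small-support wild inertia, and guaranteeing connectivity together with the exact Galois group $A_{rp}$ after contracting the auxiliary branch points is the delicate point on which the argument turns.
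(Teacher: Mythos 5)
Your opening reduction via simplicity of $A_{rp}$ is correct and matches the paper, but after that the plan both overcomplicates the reduction and leaves the one genuinely new case unproved. On the reduction: since every nontrivial $p$-subgroup $P \subset A_{rp}$ contains an element $\tau$ of order $p$, Harbater's result \cite[Theorem~2]{2} on adding branch points lets one \emph{enlarge} a realizable purely wild inertia group within the same Galois group, so realizing $(A_{rp}, \langle \tau \rangle)$ for a single order-$p$ element already yields $(A_{rp}, P)$ for every $p$-subgroup $P \supseteq \langle \tau \rangle$. Your multi-generator set-up $P = \langle c_1, \ldots, c_m \rangle$ is therefore unnecessary, and your subnormal chain points the wrong way: \cite{2} only goes \emph{up} (from smaller inertia to larger), and no descent step from $Q_{i+1}$ to its index-$p$ normal subgroup $Q_i$ is available — you mention this route and abandon it, but the constructive route you substitute is strictly harder than the problem requires. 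Up to conjugacy the theorem reduces to realizing $(A_{rp}, \langle \tau_1 \cdots \tau_u \rangle)$ for products of $u \leq r$ disjoint $p$-cycles; for $u < r$ this is \cite[Corollary~5.6]{DK}, and you do correctly identify the fixed-point-free case $u = r$ as the genuinely new one.

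The gap is in your mechanism for that new case. Patching cyclic wild pieces at several points and then ``coalescing the remaining wild points to the single point $\infty$'' so that the limiting inertia is $\langle c_1, \ldots, c_m \rangle = P$ does not correspond to any available result: formal/rigid patching (Harbater, Raynaud, Pries) adds branch points or glues \emph{prescribed} local data at a single point, but there is no Harbater--Raynaud contraction that merges several wildly ramified branch points into one whose inertia is the group generated by the individual inertia groups. Combining wild local data over a single point requires isomorphic local extensions — compare Remark~\ref{rmk_realization_product}\eqref{item:3}, which invokes \cite[Corollary~4.6]{Manish_Compositum} and \cite[Lemma~4.6]{Das} exactly to arrange this — and your appeal to $\Ind_P^{A_{rp}}$ of a $P$-cover produces a disconnected (mock) cover, so connectivity and the exact Galois group are not ``auxiliary'' issues but the whole problem; your closing paragraph concedes as much without resolving it. The paper's route for $u = r$ is entirely different and is the idea your proposal is missing: by induction on $r$, realize $(A_{(r-1)p} \times A_p, \, \langle (\tau_1 \cdots \tau_{r-1}, \tau_r) \rangle)$ via \cite[Theorem~5.2]{DK}; then, using the odd involution $c$ interchanging the first and last blocks of $p$ letters, apply Proposition~\ref{prop_purely_wild_from_tame} (resting on the patching Lemma~\ref{lem_fp_main} of Pries, which glues in a Harbater--Katz--Gabber cover with inertia $\langle \tau \rangle \rtimes \langle c \rangle$) to \emph{enlarge} the Galois group to $S_{rp}$ at the cost of a tame inertia factor and an extra tame branch point at $0$; finally pull back along the degree-$2$ Kummer cover to kill the tame ramification (\cite[Proposition~3.5]{Manish_Killing}), landing on a connected $A_{rp}$-Galois cover branched only at $\infty$ with inertia exactly $\langle \tau \rangle$. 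This detour — enlarge the group and the inertia tamely, then strip the tameness by a Kummer pullback — is what replaces your unavailable ``contraction'' step.
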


In fact, we prove the following result which can be applied to a more general set up.

\begin{proposition}[{Proposition \ref{prop_purely_wild_from_tame}}]\label{prop_intro_wild_from_tame}
Let $p$ be a prime. Let $G$ be a finite group, $G_1 \times G_2 \subset G$, where $G_1$ and $G_2$ are quasi $p$-groups. Suppose that for $i = 1, \, 2$, $\lambda_i$ is an element of order $p$ in $G_i$ such that the pair $(G_1 \times G_2, \, \langle \, (\lambda_1, \lambda_2) \, \rangle)$ is realizable. Let $c \in G$ be an element of order prime-to-$p$ that interchanges $\lambda_1$ and $\lambda_2$ via conjugation, i.e. $c^{-1} \, (\lambda_1, \lambda_2) \, c = (\lambda_2, \lambda_1)$. Consider the subgroup $H \coloneqq \langle \, G_1 \times G_2 , \, \langle \, c \, \rangle \, \rangle$ of $G$. Let $T$ be the maximal common quotient of $H$ and $\langle \, c \, \rangle$. Then the pair $(H \times_T \langle \, c \, \rangle, \, \langle \, (\lambda_1, \lambda_2) \, \rangle)$ is realizable.
\end{proposition}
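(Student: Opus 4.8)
The plan is to build the required cover by combining the given purely wild $(G_1\times G_2)$-cover with a tame cyclic cover attached to $c$ by formal patching, and to recognise the Galois group of the resulting connected cover as the fiber product $H\times_T\langle c\rangle$. The first observation is purely group-theoretic: since $G_1$ and $G_2$ are quasi $p$-groups, $G_1\times G_2$ has no nontrivial prime-to-$p$ quotient, whereas $T$ is a quotient of the prime-to-$p$ group $\langle c\rangle$; hence $G_1\times G_2$, and in particular $(\lambda_1,\lambda_2)$, lies in $\ker(H\to T)$. Therefore $((\lambda_1,\lambda_2),1)$ is a genuine element of order $p$ of $H\times_T\langle c\rangle$, so the target inertia group $\langle(\lambda_1,\lambda_2)\rangle$ embeds into the fiber product. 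By hypothesis we are handed a connected $(G_1\times G_2)$-Galois cover $Z\to\mathbb{P}^1$, étale away from $\infty$, with inertia $\langle(\lambda_1,\lambda_2)\rangle$ over $\infty$; this supplies the wild local data at $\infty$ together with the full $(G_1\times G_2)$-monodromy.

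Next I would produce the tame ingredient and patch. Using that $\langle c\rangle$ has order prime to $p$, Kummer theory provides a tamely ramified cyclic $\langle c\rangle$-Galois cover of $\mathbb{P}^1$ that is unramified over $\infty$ and branched only at two auxiliary points, say $0$ and $1$, with local inertia generated by $c$. I then apply formal patching to glue $Z$, which carries the wild inertia and the $(G_1\times G_2)$-monodromy near $\infty$, to this tame cyclic cover near the auxiliary points. The local inertia groups used, namely $\langle(\lambda_1,\lambda_2)\rangle$ at $\infty$ and cyclic tame groups generated by $c$ at $0$ and $1$, together with the ambient $(G_1\times G_2)$-monodromy generate $H\times_T\langle c\rangle$; glued along the maximal common $T$-subcover they produce a connected cover whose Galois group is exactly the fiber product. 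Concretely, because $T$ is the maximal common quotient of $H$ and $\langle c\rangle$, the compositum of the relevant function fields has degree $\abs{H}\,\abs{c}/\abs{T}$ over the base and its Galois group is $H\times_T\langle c\rangle$. Since the tame cover is unramified over $\infty$, the inertia over the point above $\infty$ receives nothing from the tame factor and remains the purely wild group $\langle(\lambda_1,\lambda_2)\rangle$; all further ramification is tame, carried by powers of $c$, and Abhyankar's lemma controls it.

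The crux is to guarantee that the patched cover is connected with Galois group the full fiber product $H\times_T\langle c\rangle$, and that its inertia over $\infty$ is exactly the order-$p$ group $\langle(\lambda_1,\lambda_2)\rangle$ rather than a larger $\langle c\rangle$-stable subgroup. This is where the relation $c^{-1}(\lambda_1,\lambda_2)c=(\lambda_2,\lambda_1)$ is essential: it is precisely the compatibility that allows the $(G_1\times G_2)$-monodromy of $Z$ and the tame $\langle c\rangle$-monodromy to be amalgamated into a single connected structure in which $c$ interchanges the two factors, so that the prescribed local data satisfy the product relation needed for patching and no proper subgroup of the fiber product occurs. I expect the main technical difficulty to lie here, in checking connectivity and that the component group is the entire fiber product (equivalently, that the $T$-subcovers of the two pieces coincide), rather than in the local analysis at $\infty$, which is immediate once the tame cover is chosen unramified there. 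The remaining verifications, that $\langle(\lambda_1,\lambda_2)\rangle$ survives as the inertia over $\infty$ and that every other branch point carries only tame inertia, are then routine.
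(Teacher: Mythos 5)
Your plan has the right raw materials, but the step you lean on --- ``apply formal patching to glue \ldots\ a connected cover whose Galois group is exactly the fiber product'' --- does not exist as stated, and it is precisely where the proof lives. Formal patching (Raynaud's Th\'eor\`eme 2.2.3, Harbater's adding branch points, or Lemma~\ref{lem_fp_main}) glues a $G_1\times G_2$-cover to a $\langle\, c \,\rangle$-cover into a connected cover whose Galois group is the \emph{generated} subgroup $H=\langle\, G_1\times G_2,\, c \,\rangle$, never the abstract fiber product $H\times_T\langle\, c \,\rangle$; and your alternative reading via composita fails outright: since $G_1\times G_2$ is quasi $p$ and $\langle\, c \,\rangle$ has prime-to-$p$ order, their maximal common quotient is trivial, so the compositum of your two covers is a $(G_1\times G_2)\times\langle\, c \,\rangle$-cover in which $c$ \emph{centralizes} $G_1\times G_2$ --- the relation $c^{-1}(\lambda_1,\lambda_2)c=(\lambda_2,\lambda_1)$ is invisible to it and you never reach $H$. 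Your degree count $\abs{H}\,\abs{c}/\abs{T}$ is the degree of the compositum of an $H$-cover with a $\langle\, c \,\rangle$-cover, but at that stage of your argument no $H$-cover has yet been produced. Worse, even granting a correct gluing step, its output is branched at $0$, $1$ and $\infty$ with genuine tame inertia $\langle\, c \,\rangle$ at the auxiliary points; ``Abhyankar's lemma controls it'' does not \emph{remove} this ramification --- Abhyankar's lemma kills tame ramification only after pulling back along a second tame cover with matching local indices, a pullback you never perform. As written, your final cover is branched at three points and so does not realize the pair in the sense of Definition~\ref{def_real_pair}.

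The paper's proof of Proposition~\ref{prop_purely_wild_from_tame} makes both missing steps explicit, and in a different configuration. First, the tame element $c$ is attached \emph{at the same point} $\infty$: one takes a Harbater--Katz--Gabber cover with group $\langle\, (\lambda_1,\lambda_2) \,\rangle\rtimes\langle\, c \,\rangle$ (this is where the hypothesis $c^{-1}(\lambda_1,\lambda_2)c=(\lambda_2,\lambda_1)$ is actually used --- it puts $c$ in the normalizer of $\langle\, (\lambda_1,\lambda_2) \,\rangle$ so that this semidirect product is a group --- not as a ``product relation'' for gluing) and patches it with the given cover via the conductor-sensitive result of Pries (Lemma~\ref{lem_fp_main}), whose numerical hypotheses on conductors are exactly what allows mixed inertia $\mathbb{Z}/p\rtimes\langle\, c \,\rangle$ to be created at a single point; the output is an $H$-cover with inertia $\langle\, (\lambda_1,\lambda_2) \,\rangle\rtimes\langle\, c \,\rangle$ over $\infty$ and a single new tame point over $0$. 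Second, the fiber product and the \'etale-away-from-$\infty$ property appear only afterwards, by pulling back along the $\langle\, c \,\rangle$-Kummer cover branched at $\{0,\infty\}$, taking a dominant component of the normalization, and invoking \cite[Proposition~3.5]{Manish_Killing} to see that the resulting cover is \'etale away from $\infty$ with inertia $\langle\, (\lambda_1,\lambda_2) \,\rangle$. A variant of your ``keep $\infty$ purely wild, put the tame data at two auxiliary points'' idea might be salvageable --- via Harbater's adding-branch-points theorem to get an $H$-cover with tame $\langle\, c \,\rangle$ points at $0,1$, followed by a Kummer pullback branched at $\{0,1\}$ --- but you would then have to supply the precise patching statement, execute the ramification-killing pullback, and verify that the $T$-subcovers of the two pieces coincide (in the paper's configuration this follows from the uniqueness of the tame cyclic cover of $\mathbb{P}^1$ branched at exactly two points); none of this is present in the proposal.
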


The above result lets us potentially realize new Galois groups starting from realization of a product of groups, while the inertia groups remain the same. The approach is different from the previously employed methods to construct purely wildly ramified covers. Namely, we first enlarge the Galois group, enlarge the inertia by a prime-to-$p$ part and add new branched points. Then we obtain our cover with the desired properties via a suitable pullback under a Kummer cover. Such ideas were implicit in \cite{Das} to obtain Alternating group covers with prescribed inertia groups using Abhyankar-type equations. To prove the proposition, we apply a formal patching result Lemma~\ref{lem_fp_main} of Pries to the given cover and a suitable Harbater-Katz-Gabber cover (see \cite[Theorem~2.3.7]{Pries}; one can `patch' together two covers, one with inertia $\mathbb{Z}/p$ and the other with inertia of the form $\mathbb{Z}/p \rtimes \mathbb{Z}/m$, $(p,m)=1$).

Another application (Corollary~\ref{cor_A_p+1}) of the above mentioned patching result is the Inertia Conjecture (Conjecture~\ref{conj_IC}) for $A_{p+1}$ for any prime $p \geq 5$ (this was proved in \cite[Theorem~5.3]{Das} under the condition that $p \equiv 2 \pmod{3}$).

In Remark~\ref{rmk_realization_product}, we note certain important general cases of product groups where Proposition~\ref{prop_intro_wild_from_tame} can be applied. One important improvement from the earlier known results (see \cite[Theorem~7.5]{Das}) is the following.

\begin{proposition}[{Remark~\ref{rmk_realization_product}~\eqref{item:3}}]
Let $p$ be a prime number. Let $G_1$ and $G_2$ be two perfect quasi $p$-groups such that the PWIC holds for $G_1$ and $G_2$. Then the PWIC is true for $G_1 \times G_2$.
\end{proposition}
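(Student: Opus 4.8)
The plan is to reduce the statement to the known realizations for the two factors and then to glue these together, the real work being to force the glued cover to be simultaneously connected and to carry the prescribed inertia. First I would dispose of the elementary reduction. Writing $\pi_i \colon G_1 \times G_2 \to G_i$ for the two projections and $P_i \coloneqq \pi_i(P)$, the hypothesis $\langle P^{G}\rangle = G_1\times G_2$ gives, by applying the surjection $\pi_i$, that $\langle P_i^{G_i}\rangle = \pi_i(\langle P^{G}\rangle)=G_i$ for $i=1,2$. Since $G_1$ and $G_2$ are quasi $p$-groups for which the PWIC (Conjecture~\ref{conj_PWIC}) is assumed to hold, the pairs $(G_1,P_1)$ and $(G_2,P_2)$ are realizable; I fix connected $G_i$-Galois covers $f_i\colon Y_i \to \mathbb{P}^1$, \'etale over $\mathbb{A}^1$, with inertia $P_i$ at a chosen point over $\infty$. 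The goal is then to manufacture a connected $(G_1\times G_2)$-Galois cover of $\mathbb{P}^1$, \'etale over $\mathbb{A}^1$, whose inertia over $\infty$ is exactly $P$ (and not merely some subgroup of $P_1\times P_2$ containing it).

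Next I would analyze how $P$ sits inside $P_1\times P_2$, since this is what dictates the required local structure. As $P$ surjects onto each $P_i$, Goursat's lemma presents it as a fibre product $P \cong P_1\times_{Q}P_2$: there are normal subgroups $A\trianglelefteq P_1$ and $B\trianglelefteq P_2$ together with an isomorphism $\bar\psi\colon P_1/A \xrightarrow{\sim} P_2/B=:Q$ such that $P=\{(x,y): \bar\psi(xA)=yB\}$, and $Q$ is again a $p$-group. The key computation is that for the fibre product $Z = Y_1\times_{\mathbb{P}^1}Y_2$, the inertia at a point over $\infty$ is precisely $P_1\times_{\bar D}P_2$, where $\bar D$ is the Galois group of the intersection $\widehat L_1\cap\widehat L_2$ of the two local wild extensions of $\widehat{\mathcal O}_{\infty}$ cut out by $Y_1$ and $Y_2$. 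Thus the task becomes: realize $Y_1$ and $Y_2$ so that $\widehat L_1\cap\widehat L_2$ is exactly the $Q$-extension matched through $\bar\psi$, forcing $\bar D=Q$ and hence inertia $P$.

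To produce this prescribed local overlap I would exploit the flexibility of the formal patching construction underlying Proposition~\ref{prop_purely_wild_from_tame}: there the local wild extension at $\infty$ enters as an \emph{input} (a Harbater--Katz--Gabber cover of the $p$-group inertia, fed into the patching Lemma~\ref{lem_fp_main}). Hence I may prescribe the $Q$-subextensions of $\widehat L_1$ and $\widehat L_2$ to be one and the same $Q$-extension $M/\widehat{\mathcal O}_\infty$, identified via $\bar\psi$, while keeping the rest of each factor cover untouched. With this choice the inertia of the distinguished point of $Z$ is exactly $P$. It then remains to see that the relevant component carries the full group $G_1\times G_2$. The monodromy group $H$ of that component surjects onto each $G_i$ (because each $f_i$ is connected with group $G_i$), and $M$ being purely local introduces no common \emph{global} subcover; once $H$ is realized as a normal subgroup of $G_1\times G_2$ surjecting onto both factors, perfectness finishes the job: such an $H$ contains every commutator $[h_1,g_1^{-1}]\times\{1\}$, hence $[G_1,G_1]\times\{1\}=G_1\times\{1\}$, and symmetrically $\{1\}\times G_2$, so $H=G_1\times G_2$.

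I expect the main obstacle to be exactly the tension in the previous paragraph: forcing the two local wild extensions to overlap in precisely the $Q$-extension $M$ (so the inertia drops to the proper subgroup $P$) while ensuring the global cover stays connected with full monodromy. The delicate case is a ``diagonal'' $P$, e.g.\ $G_1\cong G_2$ with $P$ a diagonal copy of a common $p$-subgroup, where a nontrivial local overlap is mandatory yet no common global subcover may be allowed to appear. This is the situation the Kummer-pullback patching of Proposition~\ref{prop_purely_wild_from_tame} is designed for, and perfectness of the factors is what rules out any spurious descent of the monodromy below $G_1\times G_2$; making these two requirements compatible, rather than the bookkeeping around the inertia, is where I anticipate the technical heart of the argument to lie.
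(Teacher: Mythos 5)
Your opening reduction (passing to $P_i=\pi_i(P)$ with $\langle P_i^{G_i}\rangle=G_i$) and the use of Goursat's Lemma to write $P=P_1\times_Q P_2$ match the paper, and your local formula — that the inertia of a dominant component of $Y_1\times_{\mathbb{P}^1}Y_2$ over $\infty$ is $P_1\times_{\bar D}P_2$ with $\bar D$ the Galois group of $\widehat{L}_1\cap\widehat{L}_2$ — is correct in spirit. But the step on which everything hinges, namely \emph{prescribing} the two local wild extensions so that they overlap in exactly the matched $Q$-extension $M$, has no support in the tools you cite. Lemma~\ref{lem_fp_main} only patches in a wild part of order $p$ (an inertia $\mathbb{Z}/p$, enlarged by a tame cyclic part), so it cannot feed an arbitrary local $P_i$-extension into a $(G_i,P_i)$ realization; and the Kummer-pullback trick of Proposition~\ref{prop_purely_wild_from_tame} serves to strip away tame inertia, not to control wild local behavior. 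The PWIC realizations of $(G_1,P_1)$ and $(G_2,P_2)$ are otherwise black boxes. The paper routes around exactly this: it first invokes \cite[Corollary~4.6]{Manish_Compositum} to realize the \emph{mixed} pairs $(G_1\times\pi_2(P),\,P)$ and $(\pi_1(P)\times G_2,\,P)$ — this is where perfectness actually does its work, since a perfect group and a $p$-group have no nontrivial common quotient, so the relevant compositum is automatically connected — and then applies \cite[Lemma~4.6]{Das} to arrange that the two local $P$-Galois extensions at $\infty$ are isomorphic, after which a standard formal patching of these two covers yields a connected $G_1\times G_2$-cover with inertia $P$.

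Your connectivity argument is also broken. The monodromy group $H$ of a dominant component is a subdirect (Goursat) subgroup of $G_1\times G_2$, and such a subgroup is normal only when its associated common quotient is abelian; you assume normality without justification, and without it the commutator computation proves nothing, because two perfect groups can very well share a nontrivial perfect common quotient — e.g. $G_1\cong G_2$ simple, which is precisely your own ``diagonal'' case. There $Q=P_1=P_2$ forces $\widehat{L}_1=\widehat{L}_2$ identically, while you must simultaneously guarantee $k(Y_1)\cap k(Y_2)=k(\mathbb{P}^1)$; your proposal names this tension as the anticipated technical heart but offers no mechanism to resolve it, whereas the paper's intermediate groups are engineered so that one factor of each is a $p$-group and the no-common-quotient condition is automatic, with connectedness of the final cover built into the patching construction rather than deduced from a no-common-subcover argument.
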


When $p=2$, we consider the product of Alternating and Symmetric groups. This is the first non-trivial result towards the PWIC in characteristic $2$. We establish the following evidence towards the GPWIC.

\begin{theorem}[{Corollary~\ref{cor_GPWIC_arbit_product_char_2}}]\label{cor_intro_char_2_all_prod}
When $p = 2$, the GPWIC (Conjecture~\ref{conj_GPWIC}) is true for any product $G = G_1 \times \cdots \times G_n$, $n \geq 1$, where $G_i = A_{d_i}$ for some $d_i \geq 5$ with $4 \nmid d_i$ or $G_i = S_{d_i}$ for an odd integer $d_i \geq 5$.
\end{theorem}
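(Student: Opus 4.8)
The plan is to deduce this statement from two ingredients: the individual char-$2$ realizability of purely wild inertia in each factor $A_{d_i}$ ($4\nmid d_i$) and $S_{d_i}$ ($d_i$ odd), and a general reduction that passes from the GPWIC for a direct product to these single-factor realizations, following the architecture of the proof of \cite[Theorem~7.4]{Das}.

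First I would record how the data descends under the projections. Write $G=G_1\times\cdots\times G_n$, let $\pi_i\colon G\to G_i$ be the projections, and for the prescribed non-trivial $2$-subgroups $P_1,\dots,P_r$ with $G=\langle P_1^{G},\dots,P_r^{G}\rangle$ put $P_j^{(i)}=\pi_i(P_j)$. Applying $\pi_i$ to the generation hypothesis gives $G_i=\langle (P_1^{(i)})^{G_i},\dots,(P_r^{(i)})^{G_i}\rangle$ for every $i$; here some $P_j^{(i)}$ may be trivial, so that $x_j$ is a branch point of the full cover but not of the $i$-th constituent. For each $j$ the normal closure $N_j=\langle P_j^{G}\rangle$ is a quasi-$2$-subgroup with $N_j=\langle P_j^{N_j}\rangle$ and $G=N_1\cdots N_r$, which is what will let the single-point constituents be assembled.

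Next I would realize the constituents and patch. For each $j$ I aim to produce a connected Galois cover of $\mathbb{P}^1$ branched only at $x_j$ whose inertia is exactly $P_j$ (the PWIC for the pair $(N_j,P_j)$, Conjecture~\ref{conj_PWIC}), and then glue the $r$ covers into one connected $G$-Galois cover branched at $\{x_1,\dots,x_r\}$ with the correct inertia, by taking the normalization of $\mathbb{P}^1$ in the compositum of the constituent function fields. Forcing the inertia to be exactly $P_j$---rather than the possibly larger full product $\prod_i P_j^{(i)}$---is precisely where Proposition~\ref{prop_intro_wild_from_tame} enters, realizing diagonal $2$-elements across factors; the ambient single-group covers are themselves built from covers with cyclic inertia $\mathbb{Z}/2$ through the patching Lemma~\ref{lem_fp_main}, and the restrictions $4\nmid d_i$ and $d_i$ odd are exactly what make both the base $\mathbb{Z}/2$-covers and the order-prime-to-$2$ elements that swap the relevant $2$-elements across factors available.

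The hard part will be the connectedness of the assembled cover together with the exactness of the local inertia. For the alternating factors this proceeds as in the perfect-quasi-$p$ product case (Remark~\ref{rmk_realization_product}~\eqref{item:3}): non-isomorphic simple groups share no common quotient, so their fiber products stay connected. The difficulty concentrates on the symmetric factors $S_{d_i}$, which are not perfect---each surjects onto $\mathbb{Z}/2$, and since $2$ is the characteristic these sign subcovers are themselves wild Artin--Schreier $\mathbb{Z}/2$-covers. Two symmetric factors thus share the common quotient $\mathbb{Z}/2$, and a careless compositum disconnects when the two $\mathbb{Z}/2$-subcovers coincide. I would resolve this by arranging the sign subcovers to be linearly disjoint Artin--Schreier extensions, forcing the Galois group of the compositum to be the full direct product $G$, and then verify that with these choices the inertia above each $x_j$ collapses back to precisely $P_j$ with no extra ramification introduced. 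Keeping the global monodromy maximal while holding every local inertia equal to the prescribed $P_j$ is the delicate step.
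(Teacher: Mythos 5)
Your proposal contains two genuine gaps, and the first is fatal on group-theoretic grounds alone. The linchpin of your construction --- invoking Proposition~\ref{prop_purely_wild_from_tame} to realize diagonal $2$-elements across factors --- cannot work when $p=2$: the proposition requires the conjugating element $c$ to have order prime to $p$, hence \emph{odd} order, but if $c^{-1}\lambda_1 c=\lambda_2$ and $c^{-1}\lambda_2 c=\lambda_1$ with $\lambda_1\neq\lambda_2$, then $c^2$ centralizes $\lambda_1$, and an odd-order $c$ lies in $\langle\, c^2 \,\rangle$ and so centralizes $\lambda_1$, forcing $\lambda_1=\lambda_2$. Thus in characteristic $2$ the ``order-prime-to-$2$ swapping elements'' you invoke do not exist at all, which is precisely why the paper confines Proposition~\ref{prop_purely_wild_from_tame} to \S\ref{sec_odd}. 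The parities $4\nmid d_i$ and $d_i$ odd also play a different role than the one you assign: they guarantee in Proposition~\ref{prop_cyclic_order_two_inertia} that the involution $\tau=(1,2)\cdots(2r-1,2r)$ satisfies $2r<d$, so a letter $b$ outside its support exists; the cover is then assembled by patching the order-$6$ quasi $2$-groups $\langle\,\sigma_{u,b}\,\rangle\rtimes\langle\,\tau\,\rangle$ via \cite[Corollary~2.2.2, Theorem~2.2.3]{Raynaud_AC}, with Jordan's theorem \cite[Theorem~1.1]{Jones} identifying the resulting group. In \S\ref{sec_two} a diagonal element $g^{(i)}$ across symmetric factors is handled not by swapping but by \emph{squaring}: $h^{(i)}=(g^{(i)})^2$ is even in every coordinate, the pair $(K_i,\langle\, h^{(i)}\,\rangle)$ with $K_i$ a product of alternating groups is realizable (Theorem~\ref{thm_PWIC_A_d_char_2} together with Remark~\ref{rmk_realization_product}~\eqref{item:3}), one patches against the cyclic pair $(\langle\, g^{(i)}\,\rangle,\langle\, g^{(i)}\,\rangle)$, and finally enlarges the inertia from $P_j'$ to $P_j$ by \cite[Theorem~2]{2}, after the group-theoretic reduction of Lemma~\ref{lem_wild_candidates} and Step~1 of Theorem~\ref{thm_product_S_d_char_2}.

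Your assembly step is also unsound. The normalization of $\mathbb{P}^1$ in the compositum of the $r$ constituent function fields has Galois group a subdirect product inside $\prod_j N_j$, not the internally generated subgroup $\langle\, N_1,\dots,N_r\,\rangle=G$: already when $P_1$ and $P_2$ both have full support one gets $N_1=N_2=G$, so a linearly disjoint compositum has group $G\times G$, strictly larger than $G$, while forcing coincidences to shrink it destroys connectedness or the prescribed inertia; making the sign subcovers Artin--Schreier-disjoint controls only the abelian $\mathbb{Z}/2$-quotients, not these nonabelian common quotients. The correct device, used in the paper, is the formal patching result \cite[Theorem~4.7]{Das} (``adding branch points''), applied inductively in Step~3 of Theorem~\ref{thm_product_S_d_char_2}: it directly outputs a connected cover whose group is generated by the constituent groups, with each prescribed inertia over its own branch point, so no disjointness bookkeeping arises. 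Finally, your reduction to the pairs $(N_j,P_j)$ is circular: the normal closures $N_j$ are themselves mixed products of alternating and symmetric groups, so asserting the PWIC for them presupposes essentially the statement being proved. The paper avoids this by constructing small quasi $2$-subgroups $H_i$, $B_l$ and $G_j$ from scratch in Steps~1--2, and separates the alternating block from the symmetric block only at the very end via \cite[Theorem~7.5]{Das}, which applies because a product of alternating groups is perfect and therefore shares no nontrivial common quotient with a product of symmetric groups.
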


The above result is proved in several steps. After we observe a group theoretic characterization for the potential inertia groups in $A_d$ and $S_d$ covers (Lemma~\ref{lem_wild_candidates}), we prove the PWIC for $A_d$ when $4 \nmid d$, $d \geq 5$ (Theorem~\ref{thm_PWIC_A_d_char_2}) and for $S_d$ when $d \geq 5$ is an odd integer (Theorem~\ref{thm_PWIC_S_d_char_2}). These results are consequence of formal patching results (\cite[Theorem~2.2.3]{Raynaud_AC}), \cite[Theorem~2]{2}); in fact, the covers are constructed out of several simpler Galois covers where the Sylow $2$-subgroups are realized as inertia groups. Further using the patching results to these $A_d$ and $S_d$ covers, one obtains the general Theorem~\ref{cor_intro_char_2_all_prod}.

Throughout this article, we use consequences of formal and rigid patching techniques. Except for the proof of Lemma~\ref{lem_fp_main} (which is essentially the same as \cite[Theorem~2.3.7]{Pries}), the detailed method has not been used in this paper. We omit the technical details and refer to \cite{Ha_St}, \cite[Section 3.4]{Thesis} or \cite{Ha_AC}. 

\subsection*{Acknowledgments}
I would like to thank the referee for various suggestions in improving the presentation of the paper. This work was completed while the author was at Tata Institute of Fundamental Research, Mumbai. The author is supported by NBHM Post-doctoral Fellowship.

\section{Notation, Convention and Definitions}\label{sec_notation}
We will use the following notation throughout this article without repeated mention.

\begin{enumerate}
\item Let $p$ be a prime number. Let $k$ be an algebraically closed field of characteristic $p$.
\item All the $k$-curves considered will be smooth connected curves over $\text{Spec}(k)$, unless otherwise specified.
\item In this article, we work with finite group covers. We will be mostly interested in the Alternating and Symmetric groups of degree $\geq 5$.
\item For a finite group $G$, let $p(G)$ denote the (necessarily normal) subgroup of $G$ generated by all the Sylow $p$-subgroups of $G$. A finite group $G$ is said to be a \textit{quasi {$p$}-group} if $p(G) = G$.
\item For any scheme $X$ and a point $x \in X$, the local ring at $x$ is denoted by $\mathcal{O}_{X,x}$. We denote its completion at the maximal ideal by $\widehat{\mathcal{O}}_{X,x}$. When $\widehat{\mathcal{O}}_{X,x}$ is a domain, $K_{X,x}$ stands for its fraction field.
\end{enumerate}

A \textit{cover} of curves is defined to be a finite, generically separable morphism of $k$-curves. We say that a cover $Y \, \longrightarrow \, X$ is \textit{connected} (respectively, \textit{integral} or \textit{normal}) if $X$ and $Y$ are both \textit{connected} (respectively, integral or normal). For a finite group $G$, a cover $\phi \, \colon \, Y \, \longrightarrow \, X$ of integral $k$-curves is said to be \textit{Galois with group }$G$ or $G$-\textit{Galois} if there is an inclusion of groups $G \hookrightarrow \text{Aut}\left( Y/X \right) \coloneqq \{ \sigma \in \text{Aut}_k(Y) \, | \, f \circ \sigma = f \}$ via which $G$ acts simply transitively on each generic geometric fibre. The Galois theory and the Ramification theory of covers of Noetherian normal schemes is standard in literature, see \cite[Chapter~IV]{Serre_loc}, \cite[Section~3]{Thesis}. Any cover $f \, \colon \, Y \longrightarrow X$ of connected integral $k$-curves is \'{e}tale away from a (possibly empty) finite set of closed points. The closed subset of points in $Y$ where $f$ is not \'{e}tale is said to be the \textit{ramification locus} of $f$, and its image in $X$ is said to be the \textit{branch locus} of $f$.

Our objects of study are the inertia groups over points in a $G$-Galois cover $f \, \colon \, Y \longrightarrow X$ of smooth connected $k$-curves for a finite group $G$. Since $k$ is an algebraically closed field, the inertia group at a point $y \in Y$ is the stabilizer subgroup of $y$ under the natural transitive $G$-action on the finite set $f^{-1}(f(y)) \subset Y$. It turns out that the inertia group at $y$ is also the Galois group of the Galois field extension $K_{Y,y}/K_{X,f(y)}$. Moreover, for $y, \, y' \in Y$ with $f(y) = f(y')$, the inertia groups at $y$ and $y'$ are conjugate in $G$. For a subgroup $I$ of $G$, we say that \textit{{$I$} occurs as an inertia group at a point {$x \in X$}} if there is a point $y \in Y$ with $f(y) = x$ such that $I = \text{Gal}\left( K_{Y,y}/K_{X,x} \right)$. By \cite[Chapter~IV, Corollary~4]{Serre_loc}, every inertia group $I$ is of the form $I = P \rtimes \mathbb{Z}/m$ for some $p$-group $P$ and a co-prime to $p$ integer $m$. We say that the inertia group $I$ is \textit{purely wild} or that \textit{the cover {$f$} is purely wildly ramified over {$x$}} if $I$ is a $p$-group. Similarly, \textit{{$f$} is tamely ramified over {$x$}} or \textit{a tame inertia group {$I$}} refers to the case when $I$ is a cyclic group of order prime-to-$p$. Our main interest in this article is when $X = \mathbb{P}^1_k$.

\begin{definition}\label{def_real_pair}
Let $G$ be a finite group, $I \subset G$. We say that the pair $(G, \, I)$ is \textit{realizable} if there exists a $G$-Galois cover $Y \longrightarrow \mathbb{P}^1$ of smooth projective connected $k$-curves that is \'{e}tale away from $\infty$, and $I$ occurs as an inertia group above $\infty$.
\end{definition}

In the above definition, $G$ is necessarily a quasi $p$-group (i.e. $G = p(G)$), and the group $I$ has the necessary structure of an inertia group.

\section{The GPWIC for certain product groups}
Throughout this section, $k$ is an algebraically closed field of a prime characteristic $p$. Our goal is to establish the Generalized Purely Wild Inertia Conjecture (the GPWIC, Conjecture \ref{conj_GPWIC}) for any finite product of Alternating groups for an arbitrary prime $p$ and also of Symmetric groups when $p=2$. Recall that the GPWIC (Conjecture~\ref{conj_GPWIC}) asserts that given a quasi $p$-group $G$ and a nonempty finite set $B \, \subset \, \mathbb{P}^1$ of closed points, there is connected $G$-Galois cover of $\mathbb{P}^1_k$, \'{e}tale away from $B$, with prescribed $p$-groups (satisfying a necessary condition: their conjugates generate $G$) as the inertia groups above the points in $B$.

When $p$ is an odd prime number, $A_d$ is a quasi $p$-group for all $d \geq \text{\rm max}\{5, p\}$. For $p=2$, \, $A_d$ is a quasi $2$-group for $d \geq 5$, and $S_d$ is a quasi $2$-group for $d \geq 3$. Throughout this section, we will implicitly assume these bounds on $d$. Our strategy is to first prove the PWIC (Conjecture \ref{conj_PWIC}) for $A_d$ (with some restrictions on $d$ for $p=2$), namely, for any $p$-subgroup $P \subset A_d$, there is a connected $A_d$-Galois cover of $\mathbb{P}^1$, \'{e}tale away from $\infty$, such that $P$ occurs as an inertia group above $\infty$ (i.e. the pair $(A_d, \, P)$ is realizable; Definition~\ref{def_real_pair}). Then the argument of \cite[Theorem~7.4]{DK} shows that the GPWIC holds for arbitrary product of the above Alternating groups. We will prove the GPWIC for product of Symmetric groups in Theorem~\ref{thm_product_S_d_char_2}.

\subsection{Odd Characteristic Case}\label{sec_odd}
We suppose that $p \geq 3$. By \cite[Theorem 1.7(3)]{Das}, the GPWIC is true for any product $A_{d_1} \times \cdots \times A_{d_n}$, $n \geq 1$, where each $d_i = p$ or $d_i \geq p+1$ is co-prime-to $p$. This was proved using formal patching techniques and by understanding the covers of the affine $k$-line given by explicit affine equations, \`{a} la Abhyankar. We will remove restrictions on the $d_i$'s:
$$p \nmid d_i \text{ if } d_i > p$$
which arose from the fact that the status of the PWIC remained unknown for the groups $A_{rp}$, $r \geq 2$ (see \cite[Corollary~5.5, Corollary~5.6]{DK}). This is achieved as an application of the general result Proposition~\ref{prop_purely_wild_from_tame} which in turn is obtained as a consequence of the following result on the construction of covers using formal patching due to Pries (\cite[Theorem~2.3.7, Remark~2.3.8]{Pries}; although the original result assumes that $p$ strictly divides the order of the group $G$ in its statement, we observe that the same argument works in the following general set up as the inertia groups have Sylow $p$-subgroups of order $p$). Although the proof is a bit technical, involving certain numerical criteria followed by a standard application of formal patching and a Lefchetz type argument, we include it for the sake of completeness.

\begin{lemma}[{\cite[Theorem~2.3.7, Remark~2.3.8]{Pries}}]\label{lem_fp_main}
Let $X$ be a smooth projective connected $k$-curve, $x \in X$ be a closed point. Let $G$ be a finite group and $I \subset G$ be an extension of a $p$-cyclic group $\langle \, \tau \, \rangle \cong \mathbb{Z}/p$ by a cyclic group $\langle c \rangle$ of order $m$, $(m,p) \, = \, 1$. Let $G_1$ and $G_2$ be subgroups of $G$ such that $\tau \in G_1, \, I \subset G_2$, and assume that the following hold.
\begin{enumerate}
\item There is a connected $G_1$-Galois cover $\psi_1$ of $X$ \'{e}tale away from a set $B \subset X$ of closed points with $x \in B$. Suppose that $\langle \, \tau \, \rangle$ occurs as an inertia group above $x$. For each point $y \neq x$ in $B$, let $I_y$ denote an inertia group above $y$.
\item $\psi_2 \, \colon \, Y_2 \longrightarrow \mathbb{P}^1$ is a connected $G_2$-Galois cover, \'{e}tale away from a set $\{0 = \eta_0, \eta_1, \ldots, \eta_r \}$ such that $I$ occurs as an inertia group above $0$, and for $1 \leq i \leq r$, let $J_i$ denote an inertia group above $\eta_i$.
\end{enumerate}

If $G = \langle \, G_1,  G_2 \, \rangle$, there is a set $B' =  \{ x_1, \ldots, x_r \}$ of closed points in $X$, disjoint from $B$, and a connected $G$-Galois cover $Y \longrightarrow X$, \'{e}tale away from $B \sqcup B'$, such that $I$ occurs as an inertia group above $x$, for each point $y \neq x$ in $B$, $I_y$ occurs as an inertia group above $y$, and $J_i$ occurs as an inertia group above $x_i$ for $1 \leq i \leq r$.
\end{lemma}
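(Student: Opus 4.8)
The plan is to build $Y\to X$ by formal patching at the single point $x$, gluing the global cover $\psi_1$ to a local model extracted from $\psi_2$ along the formal punctured disc $\operatorname{Spec}\widehat{\mathcal O}_{X,x}\setminus\{x\}$. First I would pass from $\psi_1$ and $\psi_2$ to their inductions $\Ind_{G_1}^G\psi_1$ and $\Ind_{G_2}^G\psi_2$, the (possibly disconnected) $G$-covers with the same branch loci and the same inertia groups now regarded inside $G$; working inside the common group $G$ is what makes a gluing meaningful, and the hypothesis $G=\langle G_1,G_2\rangle$ will later force connectedness. The geometric picture is that of enlarging the inertia by a prime-to-$p$ part at one point: away from $x$ the new cover should agree with $\psi_1$ (so the inertia at each $y\neq x$ in $B$ stays $I_y$), while in a formal neighbourhood of $x$ the local cover is replaced by the germ of $\psi_2$ at $0$, whose inertia is the larger group $I\supset\langle\tau\rangle$. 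The extra branch points $\eta_1,\dots,\eta_r$ of $\psi_2$ have nowhere to go but into this neighbourhood, and they spread out to the new points $x_1,\dots,x_r$ constituting $B'$, each carrying its inertia $J_i$.

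The crux --- and the step I expect to be the main obstacle --- is the compatibility of the two covers on the formal annulus around $x$, i.e. the matching of the patching data. Here the precise hypotheses are used in an essential way: $\tau\in G_1$ guarantees that $\psi_1$ has wild inertia exactly $\langle\tau\rangle\cong\mathbb Z/p$ at $x$, while $I\subset G_2$ with $\langle\tau\rangle\trianglelefteq I$ guarantees that the germ of $\psi_2$ at $0$ has the same wild part $\langle\tau\rangle$, its tame quotient being $\langle c\rangle$ of order $m$. Because the Sylow $p$-subgroup of the inertia is cyclic of order exactly $p$, the wild ramification is as small as possible and its conductor is pinned down; this is the origin of the numerical criterion, a Riemann--Hurwitz/conductor bookkeeping showing that the germ with inertia $I$ at $x$ together with germs $J_i$ at the $x_i$ can be interpolated consistently into the global structure of $\psi_1$, so that the two restrictions to the annulus become isomorphic $G$-covers. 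It is exactly this feature --- that $\lvert\langle\tau\rangle\rvert=p$ --- which lets one discard the hypothesis that $p$ strictly divides $\lvert G\rvert$, as recorded in \cite[Remark~2.3.8]{Pries}. The tame part $\langle c\rangle$ of $I$ is precisely what cannot be absorbed locally, and it is accounted for by the obligatory appearance of the $r$ extra branch points $x_i$.

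With compatible patching data in hand, I would invoke the formal/rigid patching machinery (Grothendieck's formal existence theorem together with the equivalence of categories underlying formal patching, cf. \cite{Ha_St}, \cite[Section~3.4]{Thesis}) to algebraize the glued datum into an honest $G$-Galois cover $Y\to X$ that is \'{e}tale away from $B\sqcup B'$. The local analysis at the patching locus then reads off the inertia: above $x$ it is $I$ (from the germ of $\psi_2$ at $0$), above each $y\neq x$ in $B$ it is $I_y$ (inherited from $\psi_1$), and above each $x_i$ it is $J_i$ (inherited from the $\eta_i$ of $\psi_2$). Finally a Lefschetz-type specialization argument closes the proof: connectedness is forced by $G=\langle G_1,\,G_2\rangle$, since the stabilizer of a connected component of $Y$ would be a subgroup of $G$ containing (conjugates of) both $G_1$ and $G_2$ through the two patched pieces, hence all of $G$; and the branch locus together with the inertia groups is preserved under the specialization because the local germs are rigid and their conductors are constant in the family. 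This yields the desired connected $G$-Galois cover of $X$ with exactly the prescribed ramification data, completing the proof of the lemma.
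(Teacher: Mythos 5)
Your proposal has a genuine gap at its central step. You propose to glue $\psi_1$ to the germ of $\psi_2$ at $0$ directly along the formal punctured disc at $x$, claiming that ``the two restrictions to the annulus become isomorphic $G$-covers.'' This cannot happen: the germ of $\psi_1$ at $x$ is a $\langle\,\tau\,\rangle\cong\mathbb{Z}/p$-Galois extension of $K_{X,x}$, while the germ of $\psi_2$ at $0$ is an $I=\mathbb{Z}/p\rtimes\mathbb{Z}/m$-Galois extension, so even after inducing to $G$ they are non-isomorphic covers of the punctured disc (different decomposition groups), and no choice of identification makes the patching data match. Relatedly, your claim that the conductor of the wild part ``is pinned down'' because $\lvert\langle\tau\rangle\rvert=p$ is false --- a $\mathbb{Z}/p$-extension of $k((v))$ can have any conductor prime to $p$ --- and in fact the paper's proof must \emph{change} the conductor of $\psi_1$: it first invokes \cite[Theorem~2.2.2]{Pries} to replace $\psi_1$ by a $G_1$-cover whose conductor at $x$ is $h_1=\gamma h_2$ for a carefully chosen $\gamma$ (with $(\gamma,pm)=1$, $p\nmid\gamma m+1$, $\gamma h_2\geq h_1'$), precisely so that Pries's Numerical Hypothesis for $e=h_1m+h_2=(\gamma m+1)h_2$ is satisfied. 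This conductor-raising step is entirely absent from your argument.

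The mechanism the paper actually uses is a degeneration, not a single-point formal gluing on the fixed curve $X$. One forms the nodal curve $X\cup\mathbb{P}^1$ meeting at $\sigma=(x,0)$, thickened over $k[[t]]$ with local equation $uv=t^{\gamma m+1}$, and applies \cite[Theorem~2.3.4]{Pries} to produce an irreducible $I$-Galois cover of $\operatorname{Spec}\left(k[[u,v,t]]/(uv-t^{\gamma m+1})\right)$ whose special fibre restricts to $\operatorname{Ind}_{\langle\tau\rangle}^{I}\operatorname{Spec}(K_{Y_1,y_1})$ on the $X$-branch and to $\operatorname{Spec}(K_{Y_2,y_2})$ on the $\mathbb{P}^1$-branch; this interpolating cover of the smoothing is what reconciles the two incompatible germs. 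Harbater's patching \cite[Proposition~2.3]{Ha_AC} then glues this with the induced covers of the two components into a $G$-cover of the thickened curve $T$, and the conclusion is read off the \emph{generic} fibre over $k((t))$, descended to $k$ by \cite[Corollary~2.7]{Ha_AC}. This also explains the extra branch points, which your proposal cannot account for: formal patching at one point of a fixed curve creates no new branching, whereas in the paper the points $x_1,\ldots,x_r$ arise because the whole $\mathbb{P}^1$-component carrying $\eta_1,\ldots,\eta_r$ is absorbed into $X_{k((t))}$ when the node is smoothed. Your connectedness argument via $G=\langle G_1,G_2\rangle$ and your reading of \cite[Remark~2.3.8]{Pries} (that $\lvert\langle\tau\rangle\rvert=p$ is what removes the hypothesis $p\,\Vert\,\lvert G\rvert$) are correct, but the core construction as you describe it would fail.
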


\begin{proof}
We have $I = \langle \, \tau \, \rangle \rtimes \langle \, c \, \rangle$. Let $m'$ be the order of the prime-to-$p$ part of the center of $I$. Let $u$ be a local parameter in $\mathbb{P}^1$ at $0$. Let $y_2 \in Y_2$ be a point above $0$ such that $I = \text{Gal}\left( K_{Y_2,y_2}/k((u)) \right)$. Let $h_2$ be the conductor of this extension. Then $(h_2,p)=1$, $(h_2,m)=m'$, and $m \, | \, h_2(p-1)$. Also let $v$ be a local parameter in $X$ at $x$. Thus we identify $K_{X,x}$ with $k((v))$. Suppose that $h_1'$ be the conductor for the local $\langle \, \tau \, \rangle \cong \mathbb{Z}/p$-Galois field extension in $\psi_1$ near $x$. So, $(h_1',p)=1$.

Choose $\gamma$ co-prime to $p$ such that $(\gamma,m)=1$, $p \nmid (\gamma m +1)$, and $\gamma h_2 \geq h_1'$. Set $h_1 \coloneqq \gamma h_2$. By \cite[Theorem 2.2.2]{Pries}, there is a connected $G_1$-Galois cover $\phi_1 \, \colon Y_1 \longrightarrow X$ with the same inertia groups and ramification behavior above the points $y \neq x$ in $X$, and there is a point $y_1 \in Y_1$ above $x$ such that the $\langle \, \tau \, \rangle$-Galois field extension $K_{Y_1,y_1}/K_{X,x}$ has conductor $h_1$.

Consider the integral $k[[t]]$-scheme $S \, \coloneqq \text{Spec}\left( k[[u,v,t]]/(uv-t^{\gamma m +1}) \right)$, and denote its closed fibre (the subscheme given by the locus of $t=0$) by $S'$. By our choice of the local parameters, $S'$ is the union of $\text{Spec}\left( \widehat{\mathcal{O}}_{X,x} \right)$ with $\text{Spec} \left( \widehat{\mathcal{O}}_{\mathbb{P}^1,0} \right)$, meeting at the point $\sigma = (x,0)$ which we identify with $(v=0, u=0)$ and has equation $u v = t^{\gamma m +1}$ near the point $\sigma$. 

Set $e \coloneqq h_1 m + h_2 = (\gamma m +1) h_2$. Then $(e,m) = (h_2,m) =  m'$, \, $p \nmid e$, and $(m,\frac{e}{(h_1,h_2)}) = (m , \frac{e}{h_2}) = (m, \gamma m +1) = 1$, and \cite[Notation 2.3.2 and Numerical Hypothesis]{Pries} hold. By \cite[Theorem 2.3.4]{Pries}, there is an irreducible $I$-Galois cover $\widehat{\phi} \, \colon \, \widehat{Z} \longrightarrow S$ whose generic fibre is irreducible, and the special fibre $\widehat{\phi}_k \, \colon \, Z_k \longrightarrow S'$ has the following properties.
\begin{eqnarray*}
Z_k \times_{S'} \text{Spec}\left( K_{X,x} \right) & \cong & \text{Ind}_{\langle \, \tau \, \rangle}^I \, \text{Spec}\left( K_{Y_1,y_1} \right),\\
Z_k \times_{S'} \text{Spec}\left( k((u)) \right) & \cong & \text{Spec}\left( K_{Y_2,y_2} \right)
\end{eqnarray*}
as $I$-Galois \'{e}tale covers of $\text{Spec}\left( K_{X,x} \right)$ and $\text{Spec}\left( k((u)) \right)$, respectively.

Let $T$ be a regular irreducible projective $k[[t]]$-curve whose geometric fibre is $X_{k((t))}$, and the closed fibre is $X \cup \mathbb{P}^1$ meeting at the point $\sigma = (x,0)$ (which we have identified with $(v=0, u=0)$ having equation $u v = t^{\gamma m +1}$ near $\sigma$; so $\text{Spec}\left( \widehat{\mathcal{O}}_{T, \sigma} \right) = S$). Let $X - x = \text{Spec}\left( A \right)$, \, $Y_1 - \psi_1^{-1}(x) = \text{Spec}\left( B_1 \right)$, \, $Y_2 - \psi_2^{-1}(0) = \text{Spec}\left( B_2 \right)$. By \cite[Proposition 2.3]{Ha_AC}, there is an irreducible normal $\langle \, G_1, G_2 \, \rangle = G$-Galois cover $h \, \colon \, V \longrightarrow T$ such that
\begin{eqnarray*}
V \, \times_T \, \text{Spec}\left( A[[t]] \right) & \cong & \text{Ind}_{G_1}^G \, \text{Spec}\left( B_1[[t]] \right),\\
V \, \times_T \, \text{Spec}\left( k[u^{-1}][[t]] \right) & \cong & \text{Ind}_{G_2}^G \, \text{Spec}\left( B_2[[t]] \right), \, \text{and}\\
V \, \times_T \, \text{Spec}\left( \widehat{\mathcal{O}_{T,\sigma}} \right) & \cong & \text{Ind}_{I}^G \, \text{Spec}\left( \widehat{Z} \right)
\end{eqnarray*}
as Galois covers of $\text{Spec}\left( A[[t]] \right)$, \, $\text{Spec}\left( k[u^{-1}][[t]] \right)$ and $S = \text{Spec}\left( \widehat{\mathcal{O}}_{T,\sigma} \right)$, respectively. Let $h^0 \, \colon \, V^0 \longrightarrow X_{k((t))}$ be the generic fibre of $h$. Then there exists a set $B' = \{x_1, \ldots, x_r\} \subset X$ disjoint from $B$ such that $h^0$ is \'{e}tale away from $\{b_{k((t))} \, | \, b \in B \sqcup B'\}$,\, $I$ occurs as an inertia group above $x_{k((t))}$, for each point $y \neq x$ in $B$, $I_y$ occurs as an inertia group above $y_{k((t))}$, for each $1 \leq i \leq r$, $J_i$ occurs as an inertia group above $x_{i,k((t))}$. As $h$ is a cover of smooth irreducible $k((t))$-curves and the closed fibre of $h$ is generically smooth, the result follows from \cite[Corollary 2.7]{Ha_AC}.
\end{proof}

As a consequence of the above, we have the following characteristic-free general result on the realization of certain pairs (Definition~\ref{def_real_pair}). This can be applied to potentially increase the Galois group without changing the $\mathbb{Z}/p$ inertia.

\begin{proposition}\label{prop_purely_wild_from_tame}
Let $p$ be a prime. Let $G$ be a finite group, $G_1 \times G_2 \subset G$, where $G_1$ and $G_2$ are quasi $p$-groups. Suppose that for $i = 1, \, 2$, $\lambda_i$ is an element of order $p$ in $G_i$ such that the pair $(G_1 \times G_2, \, \langle \, (\lambda_1, \lambda_2) \, \rangle)$ is realizable. Let $c \in G$ be an element of order prime-to-$p$ that interchanges $\lambda_1$ and $\lambda_2$ via conjugation, i.e. $c^{-1} \, (\lambda_1, \lambda_2) \, c = (\lambda_2, \lambda_1)$. Consider the subgroup $H \coloneqq \langle \, G_1 \times G_2 , \, \langle \, c \, \rangle \, \rangle$ of $G$. Let $T$ be the maximal common quotient of $H$ and $\langle \, c \, \rangle$. Then the pair $(H \times_T \langle \, c \, \rangle, \, \langle \, (\lambda_1, \lambda_2) \, \rangle)$ is realizable.
\end{proposition}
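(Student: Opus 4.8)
The plan is to deduce this from the patching Lemma~\ref{lem_fp_main} by feeding it two carefully chosen input covers: the given $(G_1 \times G_2)$-cover with purely wild inertia $\langle(\lambda_1,\lambda_2)\rangle$, and a Harbater--Katz--Gabber (HKG) cover realizing a group generated by $\langle(\lambda_1,\lambda_2)\rangle$ together with $c$. First I would set $\tau \coloneqq (\lambda_1,\lambda_2)$, an element of order $p$ since $\lambda_1,\lambda_2$ have order $p$ in the respective factors. The conjugation hypothesis $c^{-1}\tau c = (\lambda_2,\lambda_1)$ shows that $c$ normalizes neither coordinate individually but interchanges them; in particular $\langle \tau \rangle$ is normalized by $c$ precisely when we pass to the appropriate quotient, and I would compute the order of the image of $\tau$ under conjugation by $c$ to identify the semidirect product structure $I \coloneqq \langle \tau \rangle \rtimes \langle \bar c \rangle$ that will serve as the tame-by-wild inertia group in the lemma. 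The subtlety here is that $c$ itself need not centralize $\tau$, so $I$ is a genuine semidirect product $\mathbb{Z}/p \rtimes \mathbb{Z}/m'$ for some $m' \mid m$ dividing the order of $c$.

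Next I would produce the second input cover $\psi_2$ required by Lemma~\ref{lem_fp_main}: a connected $I$-Galois HKG cover of $\mathbb{P}^1$, étale away from $0$, with $I$ occurring as inertia above $0$. Since $I = \mathbb{Z}/p \rtimes \mathbb{Z}/m'$ with $(m',p)=1$, such a cover exists by the standard theory of HKG covers (this is exactly the setting of \cite[Theorem~2.3.7]{Pries} alluded to in the excerpt). For the first input cover $\psi_1$, I take the realizing $(G_1 \times G_2)$-Galois cover of $\mathbb{P}^1$ branched only over $\infty$ with inertia $\langle \tau \rangle$, guaranteed by the hypothesis that $(G_1 \times G_2, \langle \tau \rangle)$ is realizable. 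With $G_1 \coloneqq G_1 \times G_2$ and $G_2 \coloneqq I$ in the notation of the lemma (note $\tau \in G_1 \times G_2$ and $\langle \tau \rangle \subset I$), the ambient group generated is $\langle G_1 \times G_2, c \rangle = H$, and the fibre product $H \times_T \langle c \rangle$ accounts for identifying the two copies of the prime-to-$p$ element $c$ along their common quotient $T$. Applying the lemma then yields a connected $(H \times_T \langle c \rangle)$-Galois cover of $\mathbb{P}^1$ in which $I$ occurs as inertia over one point and the tame parts $J_i$ occur over auxiliary points.

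The key remaining step, and the main obstacle, is to remove the extra branch points and the tame part of the inertia, so that the resulting cover is branched \emph{only} over a single point with \emph{purely wild} inertia $\langle \tau \rangle = \langle(\lambda_1,\lambda_2)\rangle$. The lemma produces a cover whose inertia over the distinguished point is $I = \langle \tau \rangle \rtimes \langle \bar c \rangle$ (tame-by-wild) and which has additional branch points $x_1,\dots,x_r$ carrying tame inertia $J_i$. To kill these, I would pull back the cover under a suitable Kummer cover $\mathbb{P}^1 \to \mathbb{P}^1$, $z \mapsto z^n$, ramified over $\infty$ and the images of the $x_i$, with $n$ chosen divisible by $m'$ and the orders of the $J_i$. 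The Kummer pullback has the effect of trivializing the tame inertia: over the point with inertia $I$, the wild part $\langle \tau \rangle$ survives while the prime-to-$p$ cyclic part $\langle \bar c \rangle$ is absorbed into the Kummer ramification, and the tamely ramified points become étale in the pullback. The analysis here requires checking that the pullback remains connected (which follows from the prime-to-$p$ indices and the fact that $\langle \tau^H \rangle$ together with the tame data generate the whole group), and that the Galois group is preserved as $H \times_T \langle c \rangle$ rather than collapsing; this connectedness and the precise bookkeeping of inertia under the fibre product $H \times_T \langle c \rangle$ is where the argument is most delicate. Once the Kummer pullback is verified to yield a connected cover of $\mathbb{P}^1$ étale away from $\infty$ with inertia exactly $\langle(\lambda_1,\lambda_2)\rangle$ over $\infty$, the realizability of $(H \times_T \langle c \rangle, \langle(\lambda_1,\lambda_2)\rangle)$ follows, completing the proof.
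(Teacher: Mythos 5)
Your overall architecture is exactly the paper's: take the realizing $(G_1 \times G_2)$-Galois cover and a Harbater--Katz--Gabber cover as the two inputs to Lemma~\ref{lem_fp_main}, then kill the tame data by pulling back along a Kummer cover and passing to a dominant component of the normalization (the paper delegates this last verification --- \'{e}taleness away from $\infty$ and the surviving purely wild inertia $\langle \, (\lambda_1,\lambda_2) \, \rangle$ --- to \cite[Proposition~3.5]{Manish_Killing} rather than redoing the connectedness and inertia bookkeeping by hand). However, three steps are wrong as written and need repair.

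First, your $\psi_2$ does not exist: a connected Galois cover of $\mathbb{P}^1$ \'{e}tale away from a single point must have quasi $p$-Galois group, and $I = \mathbb{Z}/p \rtimes \mathbb{Z}/m'$ with $m' > 1$ is not quasi $p$ (here $p(I) = \mathbb{Z}/p \neq I$). The correct HKG input --- and the one the paper takes --- is branched at \emph{two} points: totally ramified over $\infty$ with inertia $\langle \, \tau \, \rangle \rtimes \langle \, c \, \rangle$, and tamely ramified over $0$ with inertia $\langle \, c \, \rangle$. This two-point shape is not incidental bookkeeping: it guarantees that Lemma~\ref{lem_fp_main} introduces exactly one auxiliary branch point, carrying tame inertia $J_1 = \langle \, c \, \rangle$, which (after moving it to $0$) is precisely what a single degree-$m$ Kummer cover $Z \cong \mathbb{P}^1 \to \mathbb{P}^1$ branched at $\{0, \infty\}$ can absorb. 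Your variant --- a Kummer cover ``ramified over $\infty$ and the images of the $x_i$'' --- is unavailable when there is more than one extra tame point, since a cyclic cover of $\mathbb{P}^1$ with $Z$ again rational is tamely branched at exactly two points; you cannot have it both ways, talking of several $x_i$ with inertias $J_i$ while claiming $\psi_2$ is branched at only one point. Second, you must put the full $\langle \, c \, \rangle$ into $I$, not a quotient $\langle \, \bar{c} \, \rangle$ of order $m' \mid m$: inertia groups of the form $P \rtimes \mathbb{Z}/m$ impose no faithfulness on the action (so a possibly non-faithful conjugation action of $c$ on $\langle \, \tau \, \rangle$ causes no difficulty), whereas shrinking to $\bar{c}$ can make $\langle \, G_1 \times G_2, \, I \, \rangle$ a proper subgroup of $H$, breaking the identification $\langle \, G_1 \times G_2, \, c \, \rangle = H$ that your own argument uses. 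Third, and more minor: Lemma~\ref{lem_fp_main} outputs a connected $H$-Galois cover (its group is $\langle \, G_1 \times G_2, \, I \, \rangle = H$), not an $(H \times_T \langle \, c \, \rangle)$-Galois cover as you assert; the fibre product over the maximal common quotient $T$ enters only at the final Kummer stage, via the compositum $k(W) = k(Y) \cdot k(Z)$. With these corrections your sketch becomes the paper's proof.
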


\begin{proof} 
Let $Y_1 \longrightarrow \mathbb{P}^1$ be a connected $G_1 \times G_2$-Galois cover, \'{e}tale away from $\infty$, such that $\langle \, (\lambda_1, \lambda_2) \, \rangle$ occurs as an inertia group above $\infty$. Let $Y_2 \longrightarrow \mathbb{P}^1$ be a $\langle \, (\lambda_1, \lambda_2) \, \rangle \rtimes \langle \, c \, \rangle$-Galois Harbater-Katz-Gabber cover that is totally ramified over $\infty$, \, $\langle \, c \, \rangle$ occurs as an inertia group above $0$, and is \'{e}tale everywhere else. By Lemma~\ref{lem_fp_main}, we obtain a connected $H$-Galois cover $Y \longrightarrow \mathbb{P}^1$, \'{e}tale away from $\{0, \infty\}$, such that $\langle \, (\lambda_1, \lambda_2) \, \rangle \rtimes \langle \, c \, \rangle$ occurs as an inertia group above $\infty$, and $\langle \, c \, \rangle$ occurs as an inertia group above $0$. Consider the connected $\langle \, c \, \rangle$-Galois Kummer cover $\psi \, \colon \, Z \cong \mathbb{P}^1 \longrightarrow \mathbb{P}^1$ that is \'{e}tale away from $\{0, \infty\}$, over which the cover is totally ramified. Let $W$ be a dominant component in the normalization of $Y \times_{\mathbb{P}^1} Z$, and consider the cover $f \colon W \longrightarrow Z \cong \mathbb{P}^1$. Since the function field $k(W)$ is the compositum $k(Y) \cdot k(Z)$, we see that $f$ is a Galois cover with group $H \times_T \langle \, c \, \rangle$. By \cite[Proposition~3.5]{Manish_Killing}, the Galois cover $f$ is \'{e}tale away from $\infty$, and $\langle \, (\lambda_1, \lambda_2) \, \rangle$ occurs as an inertia group above $\infty$.
\end{proof}

Now we are ready to prove the PWIC (Conjecture~\ref{conj_PWIC}) for $A_{rp}$.

\begin{theorem}\label{thm_PWIC_Alternating}
Let $p$ be an odd prime, $r \geq 1$. Then the PWIC holds for $A_{rp}$.
\end{theorem}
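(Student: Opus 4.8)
The plan is to exploit that $A_{rp}$ is simple whenever $rp \geq 5$ (the exceptional case $A_3 \cong \mathbb{Z}/3$ is itself a $p$-group, for which the Inertia Conjecture, hence the PWIC, is already known), so that for every nontrivial $p$-subgroup $P \subset A_{rp}$ the generation hypothesis $\langle P^{A_{rp}} \rangle = A_{rp}$ holds automatically. Thus the PWIC for $A_{rp}$ is equivalent to realizing \emph{every} nontrivial $p$-subgroup $P$ as an inertia group above $\infty$. First I would cut down the family of $P$ that must be handled by hand: the full Sylow $p$-subgroup is realizable by \cite[Theorem~2]{2}, and, using formal patching to merge branch points whose commuting, disjointly supported inertia groups generate a prescribed $p$-group, followed by a Kummer (Abhyankar) pullback to reconcentrate the ramification over $\infty$, it should suffice to realize the cyclic subgroups $\langle \mu_j \rangle$, where $\mu_j$ is a product of $j$ disjoint $p$-cycles for $1 \leq j \leq r$; the remaining $p$-subgroups are then assembled from conjugates of these.

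The engine for realizing the $\langle \mu_j \rangle$ is Proposition~\ref{prop_purely_wild_from_tame}. Writing $\mu_j$, up to conjugacy, as an element $(\lambda_1,\lambda_2)$ of a block product $A_a \times A_b \subset A_{rp}$ with $\lambda_1,\lambda_2$ products of equally many $p$-cycles, I would invoke the PWIC for the product $A_a \times A_b$ of perfect quasi $p$-groups (Remark~\ref{rmk_realization_product}, valid for $p \geq 5$; for $p=3$ one replaces the factors by larger even blocks such as $A_6$, since $A_3$ is not perfect) to conclude that $(A_a \times A_b, \langle (\lambda_1,\lambda_2) \rangle)$ is realizable. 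Choosing a prime-to-$p$ element $c$ interchanging $\lambda_1$ and $\lambda_2$ and applying Proposition~\ref{prop_purely_wild_from_tame} then yields a connected cover whose inertia over $\infty$ is the desired $\langle (\lambda_1,\lambda_2) \rangle \cong \mathbb{Z}/p$, carried by the fibre product $H \times_T \langle c \rangle$. Since this fibre product is never a (nonabelian) simple group — it always has a normal prime-to-$p$ piece lying over the normal subgroup coming from $A_a \times A_b$ — the final step is to feed this cover into the formal patching Lemma~\ref{lem_fp_main}, amalgamating it with a complementary cover so that the resulting Galois group becomes exactly $A_{rp}$ while the inertia $\langle (\lambda_1,\lambda_2) \rangle$ over $\infty$ is preserved, and then to apply a Kummer pullback to collapse the auxiliary branch points and leave a cover branched only over $\infty$.

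I expect the main obstacle to be the interaction between the construction of the conjugating element $c$ and the control of the final Galois group. For $p$ odd, any permutation interchanging the supports of two $p$-cycles that together exhaust $2p$ letters is forced to be an odd permutation once one also insists that it have order prime to $p$; concretely, inside $A_{2p}$ there is simply \emph{no} even, prime-to-$p$ element swapping two complementary $p$-cycles. This compels the whole folding to take place in a strictly larger ambient group, where spare fixed letters are available both to correct the parity of $c$ and to keep its order prime to $p$, and then to descend carefully back to $A_{rp}$.

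Dovetailing this parity and order bookkeeping with the three simultaneous demands — that the patched Galois group be \emph{precisely} $A_{rp}$, that the branch locus collapse to the single point $\infty$, and that \emph{all} conjugacy types of nontrivial $p$-subgroups be covered — is the delicate part. In particular, the cases where $r$ does not split into two equal "conjugate halves" (such as $r$ odd, or the genuinely minimal case $j=1$ of a single $p$-cycle, which is not of the form $(\lambda_1,\lambda_2)$ with both $\lambda_i \neq 1$) cannot be produced by one application of the folding construction and will instead require an extra patching step. I therefore anticipate that the proof proceeds by induction on the number of $p$-cycles, using Proposition~\ref{prop_purely_wild_from_tame} to promote a product realization to a larger group and Lemma~\ref{lem_fp_main} to both enlarge the Galois group to $A_{rp}$ and to splice in the missing single-cycle inertia, with the verification that each intermediate group is the intended one being the principal technical burden.
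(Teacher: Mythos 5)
Your proposal correctly anticipates the paper's core mechanism for the genuinely new case: reduce, via simplicity of $A_{rp}$ and \cite[Theorem~2]{2}, to realizing cyclic inertia $\langle \, \tau_1 \cdots \tau_u \, \rangle$ generated by products of $u$ disjoint $p$-cycles, then use the folding Proposition~\ref{prop_purely_wild_from_tame}; and your parity analysis is on target --- indeed every prime-to-$p$ element of $S_{rp}$ conjugating the relevant blocks into each other is an odd permutation, so the construction is forced through $H = S_{rp}$ with $T = \mathbb{Z}/2$, exactly as in the paper, where $c = (1,(r-1)p+1)(2,(r-1)p+2)\cdots(p,rp)$. But two of your structural choices diverge from what actually works. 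First, the descent from $S_{rp}$ back to $A_{rp}$ is \emph{not} achieved by feeding the cover into Lemma~\ref{lem_fp_main} again: patching only enlarges the Galois group to $\langle \, G_1, G_2 \, \rangle$ and adds branch points, so it can never cut an $S_{rp}$-situation down to $A_{rp}$. The descent is already built into Proposition~\ref{prop_purely_wild_from_tame}, via the Kummer pullback and \cite[Proposition~3.5]{Manish_Killing}, which simultaneously kills the tame branch point at $0$, strips $\langle \, c \, \rangle$ from the inertia at $\infty$, and replaces $H = S_{rp}$ by the kernel of the quotient to $T$, i.e. $A_{rp}$. Second, your insistence on splitting $\mu_j$ into two \emph{equal} halves (so that $c$ can literally interchange $\lambda_1$ and $\lambda_2$) is unnecessary: the paper uses the asymmetric split $\lambda_1 = \tau_1\cdots\tau_{r-1} \in A_{(r-1)p}$, $\lambda_2 = \tau_r \in A_p$, with the block-swap $c$ above, which centralizes the full product $\tau$ rather than interchanging the two coordinates when $r \geq 3$; what the proof of Proposition~\ref{prop_purely_wild_from_tame} actually uses is only that $c$ has prime-to-$p$ order and normalizes $\langle \, (\lambda_1,\lambda_2) \, \rangle$ (so that the Harbater--Katz--Gabber cover with group $\langle \, (\lambda_1,\lambda_2) \, \rangle \rtimes \langle \, c \, \rangle$ exists). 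With the asymmetric split, a clean induction on $r$ handles $u = r$ for \emph{all} $r$, and the odd-$r$ difficulty you anticipate dissolves.

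The genuine gap is in the cases your folding cannot reach: all $u < r$, and above all $u = 1$. A single $p$-cycle admits no decomposition $(\lambda_1,\lambda_2)$ with both coordinates nontrivial, so Proposition~\ref{prop_purely_wild_from_tame} simply does not apply, and your ``extra patching step'' is not a construction --- as noted, patching preserves the prescribed inertia while enlarging the group, so you would first need \emph{some} realizable pair with inertia $\langle \, \tau_1 \cdots \tau_u \, \rangle$ to patch with, which is precisely what is missing. The paper fills these cases from prior work: for $u < r$ the pairs $(A_{rp}, \, \langle \, \tau_1\cdots\tau_u \, \rangle)$ are \cite[Corollary~5.6]{DK} (proved there by explicit Abhyankar-style equations), and the base case $r = 1$ is \cite[Theorem~1.2]{BP} for $p \geq 5$, with $A_3$ cyclic. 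Relatedly, your $p = 3$ perfectness worry (and the proposed detour through larger blocks such as $A_6$) does not arise in the paper's argument, which invokes \cite[Theorem~5.2]{DK} directly for the pair $(A_{(r-1)p} \times A_p, \, \langle \, (\tau_1\cdots\tau_{r-1}, \tau_r) \, \rangle)$ together with the induction hypothesis for $A_{(r-1)p}$, rather than the general perfect-product statement of Remark~\ref{rmk_realization_product}. In short: your proposal identifies the right engine and the right parity obstruction for the new case $u = r$, but as a self-contained proof it leaves the $u < r$ conjugacy classes unrealized, and it misassigns the group-descent step to patching rather than to the Kummer fibre-product already present in Proposition~\ref{prop_purely_wild_from_tame}.
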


\begin{proof}
By \cite[Theorem~1.2]{BP}, the IC -- Conjecture~\ref{conj_IC} (and hence the PWIC) holds for $A_p$ with $p \geq 5$. The PWIC holds for $A_3$ since $A_3$ is the $3$-cyclic group. We suppose that $r \geq 2$.

For $1 \leq i \leq r$, consider the $p$-cycle
$$\tau_i \coloneqq ((i-1)p+1, \ldots, ip) \in A_{rp}.$$
Since $A_{rp}$ is a simple quasi $p$-group, it is generated by the conjugates of any cyclic subgroup of order $p$. As any $p$-subgroup of $A_{rp}$ contains an element of order $p$, and the inertia groups above a point in any connected $A_{rp}$-Galois cover are conjugates in $A_{rp}$, in view of \cite[Theorem~2]{2}, it is enough to prove the following.

\emph{For each $1 \leq u \leq r$, the pair $(A_{rp}, \, \langle \, \tau_1 \cdots \tau_u \, \rangle)$ is realizable.}

When $u < r$, this is \cite[Corollary~5.6]{DK}. Let $u=r$, and set $\tau = \tau_1 \cdots \tau_r$. We proceed via induction on $r$ and apply Proposition~\ref{prop_purely_wild_from_tame} to show that the pair $(A_{rp}, \, \langle \, \tau \, \rangle)$ is realizable. By the induction hypothesis, the pair $(A_{(r-1)p}, \, \langle \, \tau_1 \cdots \tau_{r-1} \, \rangle)$ is realizable. By \cite[Theorem~5.2]{DK}, the pair $(A_{(r-1)p} \times A_p, \, \langle \, (\tau_1 \cdots \tau_{r-1}, \tau_r) \, \rangle)$ is realizable, as well. Now take $G_1 = A_{(r-1)p}, \, G_2 = A_p = \text{Alt}\{(r-1)p+1, \ldots, rp\}, \, G = S_{rp}, \, \lambda_1 = \tau_1 \cdots \tau_{r-1}$ and $\lambda_2 = \tau_r$ in Proposition~\ref{prop_purely_wild_from_tame}. Consider the odd permutation
$$c \coloneqq (1, (r-1)p+1)\,(2, (r-1)p+2) \, \cdots \, (p, rp) \in S_{rp}$$
of order $2$. Under the natural embedding $A_{(r-1)p} \times A_p \hookrightarrow S_{rp}$, we identify the element $(\tau_1 \cdots \tau_{r-1}, \tau_r)$ with $\tau = \tau_1 \cdots \tau_r$. The element $c$ acts on $\tau$ via conjugation. Then we have $H = \langle \, G_1 \times G_2 , \, \langle \, c \, \rangle \, \rangle = S_{rp}$. As the maximal common quotient of $H$ and $\langle \, c \, \rangle \cong \mathbb{Z}/2$ is $\langle \, c \, \rangle$, by Proposition~\ref{prop_purely_wild_from_tame}, we obtain a connected $A_{rp}$-Galois cover of $\mathbb{P}^1$, branched only at $\infty$, and $\langle \, \tau \, \rangle$ occurs as an inertia group above $\infty$.
\end{proof}

A similar argument as in \cite[Theorem~7.4]{Das} establishes the GWPIC for product of Alternating groups.

\begin{corollary}\label{cor_GPWIC_Alt_products}
Let $p$ be an odd prime, $n \geq 1$. For $1 \leq i \leq n$, let $d_i \geq \text{max}\{p, 5\}$. The GPWIC holds for the product of Alternating groups $A_{d_1} \times \cdots \times A_{d_n}$.
\end{corollary}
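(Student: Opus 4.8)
The plan is to reduce the Generalized Purely Wild Inertia Conjecture (the GPWIC, Conjecture~\ref{conj_GPWIC}) for the product $A_{d_1} \times \cdots \times A_{d_n}$ to the Purely Wild Inertia Conjecture (the PWIC) for each individual factor $A_{d_i}$, which Theorem~\ref{thm_PWIC_Alternating} now supplies for \emph{all} admissible $d_i \geq \max\{p,5\}$ (the earlier restriction $p \nmid d_i$ having been removed). The essential point is that for each $i$ we may write $d_i = r_i p + s_i$ with $0 \leq s_i < p$ and decompose a chosen $p$-subgroup $P_i \subset A_{d_i}$ in terms of disjoint $p$-cycles on the first $r_i p$ symbols (the residual $s_i < p$ symbols being moved only by prime-to-$p$ parts that do not obstruct the quasi-$p$ structure); this is exactly the mechanism that lets the proof of Theorem~\ref{thm_PWIC_Alternating} apply uniformly.

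The first step is to recall the reduction from \cite[Theorem~7.4]{Das}: given that the PWIC holds for each factor $G_i = A_{d_i}$, one realizes the product $G = G_1 \times \cdots \times G_n$ with prescribed purely wild inertia by an iterated application of the formal patching result Lemma~\ref{lem_fp_main}. Concretely, for the prescribed $p$-subgroups $P_1, \ldots, P_r$ and the branch set $B = \{x_1, \ldots, x_r\} \subset \mathbb{P}^1$ required by Conjecture~\ref{conj_GPWIC}, one first uses the single-factor PWIC (Theorem~\ref{thm_PWIC_Alternating}) to produce, for each factor, a connected $A_{d_i}$-Galois cover of $\mathbb{P}^1$ branched only at one point with the appropriate $p$-group as inertia. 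One then patches these component covers together over $\mathbb{P}^1$, distributing the branch points among the prescribed locations in $B$, so that the inertia above each $x_j$ is the desired $P_j$ and the resulting cover has Galois group the full product $G$. The generation hypothesis $G = \langle P_1^G, \ldots, P_r^G \rangle$ guarantees the patched cover is connected.

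Second, I would verify that the group-theoretic bookkeeping goes through for the product: since $A_{d_i}$ is simple whenever $d_i \geq 5$, the quasi-$p$ property of each factor plus the hypothesis on the $P_j$ ensures at each stage that the subgroup generated by the relevant inertia images is the whole intended factor, so no connectivity is lost. The inductive patching preserves the inertia groups above the already-treated branch points (this is the content of Lemma~\ref{lem_fp_main}, which fixes $I_y$ above each $y \neq x$ while introducing the new inertia above the new point), so the prescribed $P_j$ survive at each $x_j$ through the whole construction. A Lefschetz-type specialization from the $k((t))$-fibre to the closed fibre, exactly as in the proof of Lemma~\ref{lem_fp_main}, then furnishes the genuine characteristic-$p$ cover over $\mathbb{P}^1_k$ with the required ramification data.

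The main obstacle I anticipate is \emph{not} the patching machinery, which is by now standard, but rather ensuring that the single-factor input is available for every $d_i$ without the co-primality restriction. This is precisely where Theorem~\ref{thm_PWIC_Alternating} (equivalently Theorem~\ref{thm_intro_PWIC_Alternating_multiple_p}) enters: earlier the argument of \cite[Theorem~7.4]{Das} stalled because the PWIC for $A_{rp}$ with $r \geq 2$ was unknown, forcing the hypothesis $p \nmid d_i$. With Theorem~\ref{thm_PWIC_Alternating} in hand, the single-factor realization holds for all $d_i \geq \max\{p,5\}$, so the obstruction is removed and the same patching argument as in \cite[Theorem~7.4]{Das} applies verbatim. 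Thus the corollary follows by feeding the strengthened single-factor result into the established product construction.
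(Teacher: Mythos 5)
Your overall strategy is exactly the paper's: the corollary is proved there in one line, by feeding the newly established single-factor result (Theorem~\ref{thm_PWIC_Alternating}, which removes the old restriction $p \nmid d_i$ for $d_i > p$) into the product construction of \cite[Theorem~7.4]{Das}, and your final paragraph says precisely this. At that level the proposal is correct and matches the paper.

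However, your sketch of \emph{how} the product reduction works misdescribes the mechanism in two ways that would matter if one tried to execute it literally. First, the product step is \emph{not} an iterated application of Lemma~\ref{lem_fp_main}: that lemma is the engine of the single-factor input (via Proposition~\ref{prop_purely_wild_from_tame}), and by its very form it outputs an inertia group of the shape $\mathbb{Z}/p \rtimes \mathbb{Z}/m$ together with new tame branch points --- iterating it cannot directly produce the prescribed purely wild inertia groups $P_j$ in a product; in Proposition~\ref{prop_purely_wild_from_tame} the tame part is subsequently killed by pulling back along a Kummer cover, which is a different move from what you describe. The reduction for products instead runs on Goursat's lemma together with the compositum and conductor-matching results (\cite[Corollary~4.6]{Manish_Compositum}, \cite[Lemma~4.6]{Das}) and \cite[Theorem~4.7]{Das} for distributing the branch points --- see Remark~\ref{rmk_realization_product}~\eqref{item:3} for the two-factor template. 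Second, and relatedly, your claim that patching factor-wise covers gives inertia exactly $P_j$ above $x_j$ glosses the main subtlety: a $p$-subgroup $P_j \subset A_{d_1} \times \cdots \times A_{d_n}$ is in general only a fibre product of its projections, not their full direct product, so a naive factor-wise patch realizes $\prod_i \pi_i(P_j)$, which may strictly contain $P_j$; since \cite[Theorem~2]{2} only allows one to \emph{enlarge} a realizable purely wild inertia group, this cannot be repaired afterwards, and the Goursat/conductor-matching argument is what handles it. (A minor additional slip: in your opening paragraph, an element of a $p$-subgroup of $A_{d_i}$ fixes the residual $s_i < p$ symbols outright; there are no ``prime-to-$p$ parts'' inside $P_i$.) Since you ultimately defer to the argument of \cite[Theorem~7.4]{Das} applying verbatim --- exactly as the paper does --- the conclusion stands, but the intermediate description should be corrected along these lines.
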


We note the following cases where Proposition~\ref{prop_purely_wild_from_tame} can be applied.

\begin{remark}\label{rmk_realization_product}
The pair $(G_1 \times G_2, \, \langle \, (\tau_1, \tau_2) \, \rangle)$ is realizable in the following cases.
\begin{enumerate}
\item (\cite[Theorem~5.2]{DK}) When the PWIC holds for two perfect (groups whose derived subgroup is the whole group) quasi $p$-group $G_1$ and $G_2$, and for any $1 \leq a \leq p-1$, there is an automorphism of $G_1 \times G_2$ taking $(\tau_1^a, \tau_2)$ to $(\tau_1, \tau_2)$, the pair $(G_1 \times G_2, \, \langle \, (\tau_1, \tau_2) \, \rangle)$ is realizable.
\item (\cite[Theorem~7.5]{Das}) Suppose that for $i = 1, \, 2$, the pair $(G_i, \, \langle \, \tau_i \, \rangle)$ is realizable. Let $\pi_i \, \colon \, G_1 \times G_2 \longrightarrow G_i$ be the projections. If $G_1$ and $G_2$ do not have a common quotient, and $Q \subset G_1 \times G_2$ is a $p$-subgroup such that $\pi_i(Q) \cong \langle \, \tau_i \, \rangle$, then the pair $(G_1 \times G_2, \, Q)$ is realizable. In particular, this holds for $Q = \langle \, (\tau_1, \tau_2) \, \rangle$.\label{item:2}
\item We note that the above result \eqref{item:2} can be generalized to:

\emph{for perfect quasi $p$-groups $G_1$ and $G_2$ for which the PWIC hold, the PWIC also holds for $G_1 \times G_2$.}

To see this, consider a $p$-subgroup $P \subset G \coloneqq G_1 \times G_2$ with $\langle \, P^G \, \rangle = G$. Let $\pi_i$ be the obvious projections as before. By Goursat's Lemma, $P = \pi_1(P) \times_Q \pi_2(P)$ for a common quotient $Q$ of $\pi_1(P)$ and $\pi_2(P)$. By \cite[Corollary~4.6]{Manish_Compositum}, the pairs $(G_1 \times \pi_2(P), P)$ and $(\pi_1 \times G_2, P)$ are realizable. Applying \cite[Lemma~4.6]{Das} to the Galois covers realizing these pairs, we obtain respective covers with the following additional property: the local $P$-Galois extensions over $K_{\mathbb{P}^1,\infty}$ are isomorphic. This allows us to use a standard formal patching technique to obtain a connected $G_1 \times G_2$-Galois cover of $\mathbb{P}^1$, \'{e}tale away from $\infty$, such that $P$ occurs as an inertia group above $\infty$.\label{item:3}
\end{enumerate}
\end{remark}

Before moving on to the even characteristic case, we see another consequence of the patching result Lemma~\ref{lem_fp_main} and the above corollary. This allows one to check the full Inertia Conjecture for a fewer candidates of potential inertia groups.

\begin{proposition}\label{prop_reduction_result}
Let $p$ be a prime number, $G$ be a finite group, and $\mathbb{Z}/p \cong P$ be a subgroup of $G$ such that the pair $(G, \, P)$ is realizable. Let $\beta \in N_G(P)$ be an element of order prime-to-$p$. Suppose that $H$ is a subgroup of $G$ containing $I \coloneqq P \rtimes \langle \, \beta \, \rangle$, and the pair $(H, \, I)$ is realizable. Then for any $p$-subgroup $P'$ of $G$ containing $P$ that is normalized by $\beta$, the pair $(G, \, P' \rtimes \langle \, \beta \, \rangle)$ is realizable.

In particular, let $p$ be an odd prime, $d' \geq d \geq p$. Suppose that for some element $\beta \in N_{A_d}(\langle \, (1, \ldots, p) \, \rangle)$ of order prime-to-$p$, the pair $(A_d, \, \langle \, (1, \ldots, p) \, \rangle \rtimes \langle \, \beta \, \rangle)$ is realizable. If $P \subset A_{d'}$ is a $p$-subgroup containing the $p$-cycle $(1, \ldots, p)$ that is normalized by $\beta$, the pair $(A_{d'}, \, P  \rtimes \langle \, \beta \, \rangle)$ is realizable.
\end{proposition}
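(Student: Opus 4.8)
The plan is to build the desired cover in two stages that separate the two features of the target inertia group $P'\rtimes\langle\beta\rangle$: first I would attach the tame generator $\beta$ to the cyclic wild inertia $P\cong\mathbb{Z}/p$ by a single application of Lemma~\ref{lem_fp_main}, and then I would enlarge the purely wild part from $P$ to $P'$ while leaving the tame part $\langle\beta\rangle$ and the branch locus untouched, using the wild‑thickening principle of \cite[Theorem~2]{2}. Throughout write $\tau$ for a generator of $P$, $m$ for the order of $\beta$, and $I':=P'\rtimes\langle\beta\rangle$; since $\beta$ normalizes $P'$ by hypothesis, $I'$ is a genuine inertia‑type subgroup of $G$. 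Note also that realizability of $(G,P)$ forces $\langle\,P^G\,\rangle=G$ (the cover $Y/\langle P^G\rangle\to\mathbb{P}^1$ would be everywhere étale, hence trivial as $\mathbb{P}^1$ is simply connected), and therefore $\langle\,(P')^G\,\rangle\supseteq\langle\,P^G\,\rangle=G$ because $P\subseteq P'$.

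For the first stage I apply Lemma~\ref{lem_fp_main} on $X=\mathbb{P}^1$ with $x=\infty$, ambient group $G$, $p$‑cyclic group $\langle\tau\rangle=P$, and tame cyclic group $\langle c\rangle=\langle\beta\rangle$, so that the lemma's $I$ is exactly $P\rtimes\langle\beta\rangle$. I take $\psi_1$ to be the cover realizing $(G,P)$, with $G_1=G$ and $B=\{\infty\}$ (no auxiliary points $y\neq x$), and $\psi_2$ to be the cover realizing $(H,I)$ after moving its unique branch point to $0$, with $G_2=H$ and $r=0$ (no auxiliary points $\eta_i$). Then $\tau\in P\subseteq G_1$, $I\subseteq H=G_2$, and $\langle G_1,G_2\rangle=\langle G,H\rangle=G$, so both hypotheses hold. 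The lemma yields a connected $G$-Galois cover of $\mathbb{P}^1$ étale away from $\infty$ (the auxiliary set $B'$ is empty since $r=0$) with $I=P\rtimes\langle\beta\rangle$ occurring as an inertia group above $\infty$; that is, $(G,\,P\rtimes\langle\beta\rangle)$ is realizable.

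For the second stage I thicken the wild inertia of this cover from $P$ to $P'$, keeping the tame factor $\langle\beta\rangle$ and the branch point $\infty$ fixed. This is precisely the thickening operation of \cite[Theorem~2]{2}: starting from a connected $G$-Galois cover with inertia $P_0\rtimes C$ over a point, one replaces the wild part $P_0$ by any $p$-subgroup $P_0\subseteq P_1\subseteq G$ that is normalized by $C$ and satisfies $\langle\,P_1^{\,G}\,\rangle=G$, obtaining a connected $G$-Galois cover with inertia $P_1\rtimes C$ there. Applying this with $P_0=P$, $P_1=P'$, and $C=\langle\beta\rangle$ — legitimate since $\beta$ normalizes $P'$ and $\langle\,(P')^G\,\rangle=G$ — produces a connected $G$-Galois cover of $\mathbb{P}^1$ étale away from $\infty$ with inertia $P'\rtimes\langle\beta\rangle$ above $\infty$, proving $(G,I')$ realizable. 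The hard part is exactly this stage: one must ensure that the formal patching enlarging the wild inertia does not disturb the already‑attached tame inertia, i.e. that $\beta$ continues to act on the thickened wild part $P'$ as it does inside $G$ and that the cover remains connected with full group $G$. Concretely this amounts to producing a local $P'\rtimes\langle\beta\rangle$-extension over $\widehat{\mathcal{O}}_{\mathbb{P}^1,\infty}$ restricting compatibly to the $P\rtimes\langle\beta\rangle$-extension coming from the first stage, and then invoking connectedness through $\langle\,(P')^G\,\rangle=G$; the generation hypotheses are immediate, so all the work sits in these local compatibilities.

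Finally, the displayed special case follows by specializing the general statement to $G=A_{d'}$, $H=A_d\subseteq A_{d'}$, $P=\langle\,(1,\ldots,p)\,\rangle$, and $P'$ equal to the given $p$-subgroup of $A_{d'}$. The assumption that $(A_d,\,\langle\,(1,\ldots,p)\,\rangle\rtimes\langle\beta\rangle)$ is realizable is exactly the input $(H,I)$; the inclusion $N_{A_d}(P)\subseteq N_{A_{d'}}(P)$ gives $\beta\in N_{A_{d'}}(P)$; and the remaining input, realizability of $(A_{d'},\langle\,(1,\ldots,p)\,\rangle)$, is the PWIC for $A_{d'}$ applied to the cyclic $p$-subgroup generated by a single $p$-cycle, whose conjugates generate the simple group $A_{d'}$, and which holds by Theorem~\ref{thm_PWIC_Alternating} together with Corollary~\ref{cor_GPWIC_Alt_products}. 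The general statement then delivers $(A_{d'},\,P'\rtimes\langle\beta\rangle)$, as claimed.
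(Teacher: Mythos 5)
Your proposal is correct and follows essentially the same route as the paper: a single application of Lemma~\ref{lem_fp_main} to the covers realizing $(G,\,P)$ and $(H,\,I)$ to realize $(G,\,P\rtimes\langle\,\beta\,\rangle)$, followed by Harbater's wild-inertia enlargement \cite[Theorem~2]{2} to pass to $(G,\,P'\rtimes\langle\,\beta\,\rangle)$, with the alternating-group case obtained by specializing $G=A_{d'}$, $H=A_d$ and citing Corollary~\ref{cor_GPWIC_Alt_products} for the realizability of $\left(A_{d'},\,\langle\,(1,\ldots,p)\,\rangle\right)$. Your extra verifications (moving the branch point of $\psi_2$ to $0$, taking $r=0$ so $B'=\emptyset$, and noting $\langle\,(P')^G\,\rangle=G$) are harmless elaborations of what the paper leaves implicit.
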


\begin{proof}
By our assumption, there are connected Galois covers $\psi_1$ and $\psi_2$ of $\mathbb{P}^1$, \'{e}tale away from $\infty$, with Galois groups $G$ and $H$, respectively, and such that the inertia groups above $\infty$ are the conjugates of $P$ and $I$, respectively, in the corresponding Galois groups. By Lemma~\ref{lem_fp_main}, the pair $(G, \, I)$ is realizable. By \cite[Theorem 2]{2}, the pair $(G, \, P' \rtimes \langle \, \beta \, \rangle)$ is realizable as well.

By Corollary~\ref{cor_GPWIC_Alt_products}, for any $d' \geq p$, the pair $(A_{d'}, \, \langle \, (1, \ldots, p) \, \rangle )$ is realizable. So, the first statement applies with $G = A_{d'}$ and $H = A_d \subset A_{d'}$.
\end{proof}

Using the above, we prove the full Inertia Conjecture (Conjecture~\ref{conj_IC}) for $A_{p+1}$ for $p \geq 5$. This was proved in \cite[Theorem~5.3]{Das} under the assumption that $p \equiv 2 \pmod{3}$.

\begin{corollary}\label{cor_A_p+1}
For any prime number $p \geq 5$, the Inertia Conjecture is true for $A_{p+1}$.
\end{corollary}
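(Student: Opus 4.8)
The plan is to deduce the Inertia Conjecture for $A_{p+1}$ from the (already known) Inertia Conjecture for $A_p$ by a single application of the reduction result Proposition~\ref{prop_reduction_result}, after first classifying all candidate inertia groups. The key observation that makes $A_{p+1}$ tractable is a Sylow count: since $p \leq p+1 < 2p$, the $p$-adic valuation of $(p+1)!$ is $1$, so every nontrivial $p$-subgroup of $A_{p+1}$ has order exactly $p$ and is generated by a single $p$-cycle. Up to conjugacy I may take $P = \langle (1,\ldots,p) \rangle$, which is an even permutation as $p$ is odd. Because $A_{p+1}$ is simple for $p \geq 5$, the normal closure $\langle P^{A_{p+1}} \rangle$ equals $A_{p+1}$ automatically, so the generation hypothesis in Conjecture~\ref{conj_IC} holds for every such $P$; thus the pairs that remain to be realized are exactly those of the form $(A_{p+1},\, P \rtimes \langle \beta \rangle)$ with $\beta \in N_{A_{p+1}}(P)$ of order prime to $p$.

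First I would compute this normalizer. In $S_{p+1}$ the $p$-cycle has support $\{1,\ldots,p\}$ and fixes the symbol $p+1$, so any normalizing element preserves that support and fixes $p+1$; hence $N_{S_{p+1}}(P) = \mathrm{AGL}(1,p) = \mathbb{Z}/p \rtimes \mathbb{Z}/(p-1)$ acting on $\{1,\ldots,p\}$. Intersecting with $A_{p+1}$ removes the odd part: a generator of the full cyclic factor acts as a $(p-1)$-cycle on the nonzero residues, whose sign is $(-1)^{p-2} = -1$, so $N_{A_{p+1}}(P) = \mathbb{Z}/p \rtimes \mathbb{Z}/\tfrac{p-1}{2}$. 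Consequently every admissible tame part $\langle \beta \rangle$ is cyclic of order $m$ dividing $\tfrac{p-1}{2}$, and each candidate inertia group is of the shape $\mathbb{Z}/p \rtimes \mathbb{Z}/m$. The identical computation gives $N_{A_p}(P) = \mathbb{Z}/p \rtimes \mathbb{Z}/\tfrac{p-1}{2}$, so (viewing $A_p \subset A_{p+1}$ as the stabilizer of $p+1$) the admissible elements $\beta$ for $A_p$ and for $A_{p+1}$ coincide. Now for each such $\beta$ the pair $(A_p,\, P \rtimes \langle \beta \rangle)$ is realizable by the Inertia Conjecture for $A_p$, which holds by \cite[Theorem~1.2]{BP}. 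I then apply the ``in particular'' part of Proposition~\ref{prop_reduction_result} with $d = p$ and $d' = p+1$: it yields realizability of $(A_{p+1},\, P' \rtimes \langle \beta \rangle)$ for every $p$-subgroup $P'$ of $A_{p+1}$ containing $(1,\ldots,p)$ and normalized by $\beta$. The order count forces $P' = P$, so this is precisely $(A_{p+1},\, P \rtimes \langle \beta \rangle)$. Letting $\beta$ range over all admissible elements exhausts the candidate list, proving the conjecture for $A_{p+1}$.

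The only genuine work is the group-theoretic bookkeeping, and that is also where the main obstacle lies: I must verify that the hypotheses of Proposition~\ref{prop_reduction_result} are met verbatim and that its conclusion covers \emph{every} case, which hinges on two points. The sign computation in the affine group must be done correctly, since it is what forces the tame order $m$ to divide $\tfrac{p-1}{2}$ rather than $p-1$; an error here would spuriously claim realizability for subgroups that do not even embed in $A_{p+1}$. And the Sylow count must be invoked to guarantee that $P$ is the unique $p$-subgroup of $A_{p+1}$ containing the chosen $p$-cycle, so that the reduction's output $P' \rtimes \langle \beta \rangle$ is exactly the target inertia group and nothing is missed. All of the geometric content—formal patching via Lemma~\ref{lem_fp_main} together with the realizations furnished by \cite{BP} and Corollary~\ref{cor_GPWIC_Alt_products}—is absorbed into the cited statements, so I expect no further analytic obstruction once the normalizer structure is settled.
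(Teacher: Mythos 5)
Your proof is correct and takes essentially the same route as the paper: both deduce the corollary from Proposition~\ref{prop_reduction_result} (with $d=p$, $d'=p+1$) combined with the Inertia Conjecture for $A_p$ from \cite[Theorem~1.2]{BP} and the key equality $N_{A_p}(\langle\,(1,\ldots,p)\,\rangle) = N_{A_{p+1}}(\langle\,(1,\ldots,p)\,\rangle)$. Your Sylow count (every nontrivial $p$-subgroup of $A_{p+1}$ has order exactly $p$) and the explicit $\mathrm{AGL}(1,p)$ normalizer computation, including the sign argument giving the $\mathbb{Z}/p \rtimes \mathbb{Z}/\frac{p-1}{2}$ structure, simply spell out details that the paper's one-line proof leaves implicit.
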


\begin{proof}
This follows from Proposition~\ref{prop_reduction_result} since
$$N_{A_p}( \langle \, (1, \cdots, p) \, \rangle ) = N_{A_{p+1}}( \langle \, (1, \cdots, p) \, \rangle ),$$
and the Inertia Conjecture (Conjecture~\ref{conj_IC}) holds for $A_p$, $p \geq 5$ by \cite[Theorem~1.2]{BP}.
\end{proof}

\subsection{Even characteristic case}\label{sec_two}
Now we show that the Generalized Purely Wild Inertia Conjecture (GPWIC, Conjecture~\ref{conj_GPWIC}) is true for any product of certain Symmetric and Alternating groups in characteristic $2$. This is the first ever non-trivial result towards the PWIC and the GPWIC in characteristic $2$. One of the main ingredients of the proofs will be Raynaud's patching result \cite[Theorem~2.2.3]{Raynaud_AC}. We start with the study of the PWIC.

Let $P$ be a $2$-subgroup of a finite quasi $2$-group $G$ such that $\langle \, P^G \, \rangle \, = \, G$, i.e. the conjugates of $P$ in $G$ generate $G$. The PWIC asserts that the pair $(G, \, P)$ is realizable (Definition~\ref{def_real_pair}). By \cite[Theorem 2]{2}, if $P$ is a $2$-subgroup as above for which the pair $(G, \, P)$ is realizable, and $P' \subset G$ is a $2$-subgroup containing $P$, then the pair $(G, \, P')$ is also realizable. Note that a Symmetric group $S_d$ is a quasi $2$-group for all $d \geq 2$ ($S_3$ has order strictly divisible by $2$, and the PWIC holds for it by \cite[Corollary~2.2.2]{Raynaud_AC}). In fact, for $d \geq 2$, \, $S_d$ is a quasi $p$-group only for the prime $p \, = \, 2$. On the other hand, $A_d$ is a quasi $2$-group for all $d \geq 5$ ($A_3$ and $A_4$ are quasi $p$-groups only for the prime $p \, = \, 3$). In these cases, we characterize the potential purely wild inertia groups ($2$-subgroups) $P$.

\begin{lemma}\label{lem_wild_candidates}
Let $d \geq 5$. We have the following.
\begin{enumerate}
\item Let $P \subset S_d$ be a $2$-subgroup such that $\big\langle \, P^{\, S_d} \, \big\rangle \, = \, S_d$. Then $P$ contains an odd permutation $\tau$ such that $\big\langle \, \langle \, \tau \, \rangle^{\, S_d} \, \big\rangle \, = \, S_d$.\label{i:1}
\item Let $P \subset A_d$ be a $2$-subgroup such that $\big\langle \, P^{\, A_d} \, \big\rangle \, = \, A_d$. Then $P$ contains an even permutation $\tau$ of order $2$ such that $\big\langle \, \langle \, \tau \, \rangle^{\, A_d} \, \big\rangle \, = \, A_d$.\label{i:2}
\end{enumerate}
If $P \subset S_4$ is a $2$-subgroup such that $\big\langle \, P^{ \, S_4} \, \big\rangle \, = \, S_4$, then $P$ contains either a transposition or a $4$-cycle $\tau$ such that $\big\langle \, \langle \, \tau \, \rangle^{\, S_4} \, \big\rangle \, = \, S_4$.
\end{lemma}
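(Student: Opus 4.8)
\medskip

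The plan is to reduce the problem to a statement about generating sets of the permutation groups $S_d$ and $A_d$, and then to invoke classical transitivity/generation facts. The key structural input is that a $2$-subgroup $P$ with $\langle P^{S_d}\rangle = S_d$ (respectively $\langle P^{A_d}\rangle = A_d$) must contain an element that is itself \emph{by itself} enough to normally generate the ambient group. Since $P$ is a $2$-group, every non-identity element has order a power of $2$, so the elements I want to extract will be involutions or, more generally, elements whose cycle type I can control modulo the sign homomorphism.

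\medskip

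First I would handle part~\eqref{i:1}. Since $S_d$ is generated by the conjugates of $P$, and $S_d/A_d \cong \mathbb{Z}/2$, the subgroup $P$ cannot lie inside $A_d$; hence $P$ contains at least one odd permutation. The main point is to upgrade this to: \emph{some single odd element $\tau \in P$ already normally generates $S_d$}. Here I would use the classification of the maximal subgroups of $S_d$ together with the observation that $\langle \tau^{S_d}\rangle$ is a normal subgroup of $S_d$, hence is one of $1$, $A_d$, or $S_d$ (the normal subgroups of $S_d$ for $d \geq 5$). An odd permutation cannot normally generate a subgroup of $A_d$, so $\langle \tau^{S_d}\rangle \neq 1, A_d$; thus $\langle \tau^{S_d}\rangle = S_d$ automatically for \emph{every} odd $\tau$. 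So in fact any odd element of $P$ works, and the content of~\eqref{i:1} is simply the existence of an odd element, which I argued above.

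\medskip

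For part~\eqref{i:2}, the analysis is cleaner because $A_d$ is simple for $d \geq 5$, so its only normal subgroups are $1$ and $A_d$. Thus for any non-identity $\tau \in A_d$ we automatically have $\langle \langle \tau\rangle^{A_d}\rangle = A_d$. The real content is to produce such a $\tau$ of order exactly $2$: since $P$ is a non-trivial $2$-group (it cannot be trivial, as $\langle P^{A_d}\rangle = A_d \neq 1$), its center contains an element of order $2$, and any element of order $2$ in $A_d\subset S_d$ is a product of an even number of disjoint transpositions, hence an even permutation of order $2$, as required. Simplicity of $A_d$ then gives the normal generation for free.

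\medskip

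The hard part, and the reason the $S_4$ case is separated out, is that $A_4$ and $S_4$ are exceptional: $A_4$ is \emph{not} simple (it has the Klein four normal subgroup $V$), so the clean ``any non-identity element normally generates'' argument fails. For the final $S_4$ statement I would argue directly: the normal subgroups of $S_4$ are $1$, $V$, $A_4$, $S_4$, and a $2$-subgroup $P$ with $\langle P^{S_4}\rangle = S_4$ must contain an odd permutation (as in~\eqref{i:1}), which in a $2$-group is either a transposition or a $4$-cycle. I would then check, using the normal-subgroup lattice, that the normal closure of a transposition is $S_4$ and the normal closure of a $4$-cycle is also $S_4$ (since a $4$-cycle is odd and does not lie in $V$ or $A_4$, its normal closure is neither $V$, $A_4$, nor $1$). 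The only genuine subtlety is ruling out that $P$ consists entirely of elements inside $A_4$ together with double transpositions; but the hypothesis $\langle P^{S_4}\rangle = S_4$ forces an odd element, and the only odd elements of $2$-power order in $S_4$ are transpositions and $4$-cycles, which closes the argument.
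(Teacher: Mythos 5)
Your proposal is correct and follows essentially the same route as the paper: extract an odd permutation from $P$ via the normality of $A_d$ (so a $P$ inside $A_d$ could not normally generate $S_d$), conclude $\langle\,\langle\,\tau\,\rangle^{S_d}\,\rangle = S_d$ because the only non-trivial proper normal subgroup $A_d$ contains no odd permutations, use simplicity of $A_d$ plus the existence of an involution in a non-trivial $2$-group for part (2), and the lattice $1 \subset V \subset A_4 \subset S_4$ for the $S_4$ case. Your passing appeal to ``the classification of the maximal subgroups of $S_d$'' is superfluous --- the normal subgroup lattice, which you invoke immediately afterwards, is all that is needed --- but this does not affect correctness.
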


\begin{proof}
\begin{enumerate}
\item If $P \subset A_d$, every conjugate of $P$ is also contained in $A_d$. Since $\big\langle \, P^{\, S_d} \, \big\rangle \, = \, S_d$, we may choose an odd permutation $\tau \in P$. Since $\big\langle \, \langle \, \tau \, \rangle^{\, S_d} \, \big\rangle$ is a non-trivial normal subgroup of $S_d$ containing an odd permutation $\tau$, $\big\langle \, \langle \, \tau \, \rangle^{\, S_d} \, \big\rangle \, = \, S_d$.
\item Since $A_d$ is a simple group, for every non-identity element $\tau \in A_d$, we have $\big\langle \, \langle \, \tau \, \rangle^{\, A_d} \, \big\rangle \, = \, A_d$. As $P$ always contains an element $\tau$ of order $2$, the result follows.
\end{enumerate}
As in \eqref{i:1}, $P$ contains an odd permutation $\tau \in S_4$ such that $\big\langle \, \langle \, \tau \, \rangle^{\, S_4} \, \big\rangle \, = \, S_4$. In $S_4$, such an element must be either a transposition or a $4$-cycle.
\end{proof}

We first consider connected $A_d$-Galois and $S_d$-Galois covers ($d \geq 5$) of $\mathbb{P}^1$ over an algebraically closed field of characteristic $2$, \'{e}tale away from $\{\infty\}$, such that the inertia groups above $\infty$ are cyclic of order $2$; cf. Lemma~\ref{lem_wild_candidates}.

\begin{proposition}\label{prop_cyclic_order_two_inertia}
Let $d \geq 5$, and $1 \leq r < \floor{d/2}$ be an integer. For $1 \leq u \leq r$, let $\tau_u$ be the transposition $(2u-1, 2u)$, and consider the element $\tau \coloneqq \tau_1 \cdots \tau_r \in S_d$ of order $2$. Over any algebraically field of characteristic $2$, we have the following.
\begin{enumerate}
\item If $r$ is an even integer, the pair $(A_d, \, \langle \, \tau \, \rangle)$ is realizable.\label{Alt}
\item If $r$ is an odd integer, the pair $(S_d, \, \langle \, \tau \, \rangle)$ is realizable.\label{Sym}
\end{enumerate}
\end{proposition}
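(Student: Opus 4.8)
The plan is to reduce the statement to a single patching construction and to read off the Galois group from the parity of $r$. First I would record the elementary group theory: the element $\tau = \tau_1 \cdots \tau_r$ has order $2$, it lies in $A_d$ exactly when $r$ is even, and in either case its normal closure is all of $G$ — for $r$ even this is the simplicity of $A_d$, and for $r$ odd it is because $\big\langle \, \langle \, \tau \, \rangle^{\, S_d} \, \big\rangle$ is a nontrivial normal subgroup of $S_d$ containing an odd permutation, hence equals $S_d$ (this is exactly the mechanism of Lemma~\ref{lem_wild_candidates}). Thus $\langle \, \tau \, \rangle$ is a legitimate purely wild inertia candidate, and the real content is to produce a cover realizing it. I would also note that the hypothesis $r < \lfloor d/2 \rfloor$ forces $d - 2r \geq 2$, so $\tau$ fixes at least two of the points $\{1, \ldots, d\}$; these free points are what make room for the auxiliary components below, and they are also what is needed for the generating subgroups to reach all of $S_d$ (resp.\ $A_d$), since $\tau$ itself only moves $\{1, \ldots, 2r\}$.

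The construction itself I would carry out by formal patching in the style of \cite[Theorem~2.2.3]{Raynaud_AC}, assembling the cover on a degenerate $\mathbb{P}^1$ (a handle meeting several teeth) out of simpler covers. On the handle, through the point $\infty$, I place a cover whose local inertia at $\infty$ is the Artin--Schreier $\mathbb{Z}/2$-cover generated by $\tau$; on the teeth I place connected covers of smaller Symmetric (or Alternating) groups acting on overlapping subsets of $\{1, \ldots, d\}$ — including the free points — each \'{e}tale over the rest of its tooth and ramified only at its node with inertia a Sylow $2$-subgroup, which exist by \cite[Theorem~2]{2}. The subsets are chosen so that the handle group together with the teeth groups generate $G$. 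Patching and passing to the generic fibre then produces a connected $G$-Galois cover of $\mathbb{P}^1_{k((t))}$; the wild ramification carried by the teeth is concentrated at the nodes and becomes \'{e}tale on the generic fibre, so this cover is \'{e}tale away from $\infty$ with inertia $\langle \, \tau \, \rangle$ there. A Lefschetz-type specialization over $k$ (as in the proof of Lemma~\ref{lem_fp_main}) then yields the cover over the algebraically closed field $k$. The parity of $r$ enters only through the choice of generating subgroups: when $r$ is even every piece can be taken inside $A_d$, so the assembled group is $A_d$; when $r$ is odd the odd permutation $\tau$ together with the other generators forces the group up to $S_d$.

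The step I expect to be the main obstacle is controlling the inertia at $\infty$ so that it is exactly the order-$2$ group $\langle \, \tau \, \rangle$ and not some strictly larger $2$-group. This requires that the teeth covers be genuinely \'{e}tale at $\infty$ (their wild ramification must sit only at the nodes) and that the handle cover contribute precisely $\langle \, \tau \, \rangle$ there, which in turn constrains how the local $2$-group covers are glued and forces the bookkeeping of conductors and node inertia in the patching datum. Coupled with this is the requirement that the patched cover be connected with Galois group exactly $G$, rather than a proper subgroup or a disconnected cover; verifying the generation condition and the connectivity is precisely where the two free points guaranteed by $r < \lfloor d/2 \rfloor$ are used. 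The remaining verifications — that the cover is \'{e}tale away from $\infty$ and that the parity bookkeeping lands in the correct group — are then routine consequences of the patching formalism.
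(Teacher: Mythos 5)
Your overall architecture (patch small realizable pairs into the target group, let parity of $\tau$ decide between $A_d$ and $S_d$, use the fixed points guaranteed by $r < \lfloor d/2 \rfloor$) matches the paper's strategy in outline, and your group-theoretic bookkeeping (normal closure of $\tau$, parity) is fine. But the central patching step, exactly the one you flag as ``the main obstacle,'' does not work as described, and the obstacle is not a matter of bookkeeping: it is fatal to the construction. You propose to place on the teeth covers of smaller Symmetric/Alternating groups ramified only at their nodes \emph{with Sylow $2$-subgroup inertia}, against a handle carrying only a $\mathbb{Z}/2 = \langle \tau \rangle$ Artin--Schreier cover, and to conclude that the node ramification ``becomes \'{e}tale on the generic fibre'' so that the patched cover has inertia exactly $\langle \tau \rangle$ at $\infty$. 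Formal patching does not permit this mismatch: at each node, the fibre of the would-be cover, as a $G$-set, must be computed consistently from both branches. The tooth side gives $G/Q$ with $Q$ a Sylow $2$-subgroup, while a handle that is \'{e}tale (or has inertia only inside $\langle \tau \rangle$) at that node gives a different $G$-set, so no gluing datum exists. The only mechanisms available in this paper for joining covers at a node are (i) identical inertia with isomorphic local extensions on both branches, after adjusting conductors (this is what underlies \cite[Theorem~2.2.3]{Raynaud_AC} and Remark~\ref{rmk_realization_product}\eqref{item:3}), or (ii) the special mixed construction of Pries used in Lemma~\ref{lem_fp_main}, which interpolates only between $\mathbb{Z}/p$ and $\mathbb{Z}/p \rtimes \mathbb{Z}/m$ with $(m,p)=1$, subject to numerical hypotheses --- not between $\mathbb{Z}/2$ and a large $2$-group. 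A quick reductio confirms the gap: if your scheme were valid, one could take a single tooth carrying a $(G, \mathrm{Syl}_2(G))$-cover (which exists by \cite[Theorem~2]{2} and \cite{Raynaud_AC}) against a $\mathbb{Z}/2$-handle and realize $(G, \mathbb{Z}/2)$ for \emph{every} quasi $2$-group $G$, proving the full PWIC outright; patching results in the literature only ever \emph{enlarge} inertia, never shrink it.

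The paper circumvents this precisely by never letting any constituent cover have inertia larger than $\langle \tau \rangle$. For each $1 \leq u \leq r$ and each fixed point $b \in \{2r+1, \ldots, d\}$ it forms the order-$6$ subgroup $H_{u,b} = \langle \, (2u-1,2u,b) \, \rangle \rtimes \langle \, \tau \, \rangle \cong S_3$, in which the full element $\tau$ generates a \emph{Sylow} $2$-subgroup; Raynaud's solvable case \cite[Corollary~2.2.2]{Raynaud_AC} realizes each pair $(H_{u,b}, \langle \tau \rangle)$, and since all these pieces share the \emph{same} inertia group $\langle \tau \rangle$, \cite[Theorem~2.2.3]{Raynaud_AC} patches them into a realization of $(H, \langle \tau \rangle)$ with $H = \langle H_{u,b} \rangle$. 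The group $H$ is then identified not by choosing generating subsets as in your proposal, but by Jordan's theorem \cite{Jones}: $H$ is primitive and contains $3$-cycles, hence contains $A_d$ (with a separate check at $d=5$), and the parity of $\tau$ forces $H = A_d$ or $S_d$ since $H$ is generated by the $H$-conjugates of $\tau$. If you want to salvage your construction, the repair is exactly this move: replace the teeth carrying large groups with Sylow inertia by small subgroups containing $\tau$ in which $\langle \tau \rangle$ is already Sylow.
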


\begin{proof}
For each $1 \leq u \leq r$ and each $b \in \{2r+1, \ldots , d\}$ (as $2r < d$ by out assumption, this set is non-empty), consider the $3$-cycle
$$\sigma_{u,b} \, \coloneqq \, (2u-1,2u, b).$$
Then for any $u$ and $b$, we have $\tau_u^{-1} \sigma_{u,b} \tau_u = \sigma_{u,b}^2$. As $\tau_{u'}$ has support disjoint from the set $\{2u-1, \, 2u, \, b\}$ for $u' \neq u$, $\tau^{-1} \sigma_{u,b} \tau = \sigma_{u,b}^2$. So for each $u$ and $b$, $\tau \in N_{S_d}(\langle \, \sigma_{u,b} \, \rangle)$, and the subgroup $H_{u,b} \coloneqq \langle \, \sigma_{i,b} \, \rangle \rtimes \langle \, \tau \, \rangle$ is a quasi $2$-group of order $6$. As $\langle \, \tau \, \rangle$ is a Sylow $2$-subgroup of $H_{u,b}$, by Raynaud's proof of the Abhyankar's Conjecture on the affine line (\cite[Corollary~2.2.2]{Raynaud_AC}), the pair $(H_{u,b}, \, \langle \, \tau \, \rangle)$ is realizable when the base field has characteristic $2$. Consider the quasi $2$-subgroup
$$H \, \coloneqq \, \langle \, H_{u,b} \, | \, 1 \leq u \leq r, \, 2r+1 \leq b \leq d \, \rangle$$
of $S_d$. From the construction, $H$ is a primitive subgroup of $S_d$. By the patching result \cite[Theorem~2.2.3]{Raynaud_AC}, the pair $(H, \, \langle \, \tau \, \rangle)$ is realizable.

When $d = 5$, by our assumption, $r = 1 \text{ or } 2$. If $r = 1$, $H$ contains the transposition $(1,  2)$. By Jordan's Theorem \cite[Theorem~1.1]{Jones}, $A_5 \subset H$. If $r = 2$, $H$ contains the $3$-cycle $(1,2,5)$ that fixes $2$ points in $\{1, \ldots, 5\}$. By \cite[Theorem~1.2(3)]{Jones}, $A_5 \subset H$ (note that $PGL_2(4) \cong A_5$). For $d \geq 6$, $H$ is a primitive permutation group containing a $3$-cycle $(1,2,d)$; so again by Jordan's Theorem, $A_d \subset H$. Since for all $d \geq 5$, $H$ is a quasi $2$-subgroup of $S_d$ containing $A_d$, we have $H \in \{A_d, S_d\}$.

If $r$ is an even integer, $\tau$ is an even permutation. So $\left\langle \, \langle \, \tau \, \rangle^{\, S_d} \, \right\rangle = A_d$. Since $\left\langle \, \langle \, \tau \, \rangle^{\, H} \, \right\rangle = H$, we have $H = A_d$. If $r$ is an odd integer, $\tau$ is an odd permutation and we have $H = S_d$.
\end{proof}

\begin{remark}\label{rmk_S_4_transposition}
A similar argument as above shows that for any transposition $\tau \in S_4$, the pair $(S_4, \, \langle \, \tau \, \rangle)$ is realizable. The subgroup $H$ as in the proof becomes a primitive quasi $2$-subgroup of $S_4$ containing the transposition $\tau$ and a $3$-cycle. As $PGL_2(3) \cong S_4$, by \cite[Theorem~1.2(3)]{Jones}, $H$ contains $A_4$. Since $H$ is a quasi $2$-subgroup of $S_4$, we conclude that $H = S_4$.
\end{remark}

\begin{theorem}\label{thm_PWIC_A_d_char_2}
Let $d \geq 5$ be an integer that is not a multiple of $4$. The PWIC is true for $A_d$ in characteristic $2$.
\end{theorem}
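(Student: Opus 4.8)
The plan is to reduce the full statement to the cyclic-inertia case already settled in Proposition~\ref{prop_cyclic_order_two_inertia}, and to interpret the hypothesis $4 \nmid d$ as exactly the condition that makes the construction there available. First I would fix an arbitrary $2$-subgroup $P \subset A_d$ with $\langle\,P^{A_d}\,\rangle = A_d$ and apply Lemma~\ref{lem_wild_candidates}\eqref{i:2} to extract an even permutation $\tau \in P$ of order $2$ with $\langle\,\langle\tau\rangle^{A_d}\,\rangle = A_d$ (the latter being automatic, as $A_d$ is simple for $d \geq 5$). Since realizability is monotone in the inertia subgroup, i.e. if $(A_d,\langle\tau\rangle)$ is realizable and $\langle\tau\rangle \subseteq P$ then $(A_d,P)$ is realizable by \cite[Theorem~2]{2}, it is enough to realize the \emph{cyclic} pair $(A_d,\langle\tau\rangle)$. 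This collapses the theorem to the single family of inertia groups generated by even involutions.

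Next I would record the conjugacy data. An even involution $\tau$ is a product of $r$ disjoint transpositions with $r$ even and $r \geq 2$, and after conjugation by a suitable element of $S_d$ I may assume $\tau = (1,2)(3,4)\cdots(2r-1,2r)$, the standard representative of Proposition~\ref{prop_cyclic_order_two_inertia}; conjugating a realizing cover by the automorphism of $A_d$ induced by that element of $S_d$ shows that realizability of $(A_d,\langle\tau\rangle)$ depends only on $r$. Now the hypothesis enters: $\tau$ moves $2r \leq d$ letters, and if $2r = d$ then $r = d/2$ would be even, forcing $4 \mid d$, contrary to assumption. Hence $2r < d$, so $\tau$ fixes at least one point. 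This is precisely the regime in which the auxiliary $3$-cycles $\sigma_{u,b} = (2u-1,2u,b)$ of Proposition~\ref{prop_cyclic_order_two_inertia}, anchored at a fixed point $b$ of $\tau$, exist, and part~\eqref{Alt} of that proposition then yields that $(A_d,\langle\tau\rangle)$ is realizable. Combined with the reduction above, this gives the PWIC for $A_d$.

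The place demanding care, and where $4 \nmid d$ is genuinely used, is the extremal value of $r$. The largest even $r$ is $r = \lfloor d/2 \rfloor$ when $d \equiv 1 \pmod 4$, in which case $\tau$ fixes a single point, and $r = d/2$ when $4 \mid d$, in which case $\tau$ is fixed-point-free. In the fixed-point-free case the set $\{2r+1,\ldots,d\}$ is empty, no admissible $b$ exists, and the entire $3$-cycle and Jordan-primitivity mechanism of Proposition~\ref{prop_cyclic_order_two_inertia} breaks down; this is exactly why such $d$ must be excluded. When $d \equiv 1 \pmod 4$, by contrast, the unique fixed point $b = d$ still supplies the $3$-cycles $\sigma_{u,d}$, and one runs the same construction with $H = \langle\,\sigma_{u,d},\tau\,\rangle$; checking that this $H$ is still primitive and contains a $3$-cycle forces $A_d \subseteq H$ by Jordan's theorem, whence $H = A_d$ since $\tau$ is even. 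Establishing primitivity when only one fixed point is available (the $d = 5$ instance being verified by hand) is the one nontrivial verification, and I expect it to be the main obstacle in making the argument fully rigorous.
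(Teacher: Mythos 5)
Your proposal is correct and follows essentially the same route as the paper's proof: extract an even involution $\tau \in P$ via Lemma~\ref{lem_wild_candidates}~\eqref{i:2}, reduce to the cyclic pair $(A_d, \, \langle \, \tau \, \rangle)$ by the monotonicity result \cite[Theorem~2]{2}, and invoke Proposition~\ref{prop_cyclic_order_two_inertia}~\eqref{Alt}, with the hypothesis $4 \nmid d$ entering exactly as you say: since $r$ is even, $2r = d$ would force $4 \mid d$, so $\tau$ has a fixed point.

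On the boundary case you flag as the main obstacle: it is not a genuine gap, and your reading is in fact more careful than the paper's own wording. The hypothesis $1 \leq r < \floor{d/2}$ in Proposition~\ref{prop_cyclic_order_two_inertia} is an off-by-one slip: its proof uses only that the anchor set $\{2r+1, \ldots, d\}$ is nonempty, i.e.\ $2r < d$, and it explicitly treats $d = 5$, $r = 2$, which violates the stated bound. The paper's proof of the present theorem repeats the slip by asserting $r < \floor{d/2}$, which fails for $d \equiv 1 \pmod 4$ and $r = (d-1)/2$ --- precisely your single-fixed-point case; but the proposition as actually proved covers it. As for the primitivity of $H$ when $b = d$ is the only available anchor, which you expected to be the hard verification: it follows from a routine block argument. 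The subgroup generated by the $3$-cycles $\sigma_{u,d} = (2u-1, 2u, d)$, $1 \leq u \leq r$, is transitive; if $B$ is a block containing $d$ and some $a \in \{2u-1, 2u\}$, then $\sigma_{u,d}(B)$ meets $B$ (in $2u-1$ if $a = 2u-1$, in $d$ if $a = 2u$), so $B = \sigma_{u,d}(B) \supseteq \{2u-1, 2u, d\}$; and for $u' \neq u$, the block $\sigma_{u',d}(B)$ contains the fixed point $2u-1 \in B$, whence $\sigma_{u',d}(B) = B$ and $2u'-1 = \sigma_{u',d}(d) \in B$, then $2u' \in B$ as well, forcing $B = \{1, \ldots, d\}$. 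Hence $H$ is primitive, and Jordan's theorem (for $d = 5$, \cite[Theorem~1.2(3)]{Jones}, as in the paper) completes your argument exactly as you anticipated.
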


\begin{proof}
Let $P \subset A_d$ be a $2$-subgroup such that $\left\langle \, P^{\, A_d} \, \right\rangle = A_d$. By Lemma~\ref{lem_wild_candidates}~\eqref{i:2}, there is an even permutation $\tau \in P$ of order $2$ such that $\left\langle \, \langle \, \tau \, \rangle^{\, A_d} \, \right\rangle = A_d$. Consider a disjoint cycle decomposition $\tau = \tau_1 \cdots \tau_r$ of $\tau$ in $A_d$ where each $\tau_u$ is a transposition, and $r$ is necessarily an even integer, $2r \leq d$. Since $4 \nmid d$ by our assumption, $r < \floor{d/2}$. As $r \geq 2$, $\tau$ is conjugate to the permutation $(1, 2) \cdots (2r-1, 2r)$. By Proposition~\ref{prop_cyclic_order_two_inertia}~\eqref{Alt}, the pair $(A_d, \, \langle \, \tau \, \rangle)$ is realizable. Now applying \cite[Theorem~2]{2}, the pair $(A_d, \, P)$ is realizable.
\end{proof}

Remark~\ref{rmk_realization_product}\eqref{item:3} and the above result have the following immediate consequence.

\begin{corollary}\label{cor_GPWIC_product_A_d_char_2}
In characteristic $2$, the GPWIC (Conjecture~\ref{conj_GPWIC}) is true for any product $A_{d_1} \times \cdots \times A_{d_u}$, $u \geq 1$, where each $d_i \geq 5$ is an integer not divisible by $4$. 
\end{corollary}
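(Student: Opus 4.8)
The plan is to deduce the multi-branch-point statement (GPWIC) from the single-branch-point statement (PWIC) for the group and all of its normal subgroups, exactly as in the odd-characteristic Corollary~\ref{cor_GPWIC_Alt_products}. Write $G = A_{d_1} \times \cdots \times A_{d_u}$. Each factor $A_{d_i}$ (with $d_i \geq 5$, $4 \nmid d_i$) is a simple, hence perfect, quasi $2$-group for which the PWIC holds by Theorem~\ref{thm_PWIC_A_d_char_2}. Since a finite product of perfect quasi $2$-groups is again a perfect quasi $2$-group, I would run an induction on the number of factors using Remark~\ref{rmk_realization_product}\eqref{item:3}: assuming the PWIC for $A_{d_1} \times \cdots \times A_{d_{u-1}}$, the remark applied with $G_1 = A_{d_1} \times \cdots \times A_{d_{u-1}}$ and $G_2 = A_{d_u}$ yields the PWIC for $G$. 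The same induction gives the PWIC for every sub-product $\prod_{j \in S} A_{d_j}$, $S \subseteq \{1, \ldots, u\}$.

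Next I would set up the reduction of the GPWIC data to these PWIC statements. Given non-trivial $2$-subgroups $P_1, \ldots, P_r$ of $G$ with $\langle P_1^G, \ldots, P_r^G \rangle = G$ and distinct closed points $x_1, \ldots, x_r \in \mathbb{P}^1$, put $N_i := \langle P_i^G \rangle$. Because each $A_{d_j}$ is simple nonabelian, every normal subgroup of $G$ is a sub-product, so $N_i = \prod_{j \in S_i} A_{d_j}$ where $S_i = \{ j : \pi_j(P_i) \neq 1 \}$ and $\pi_j$ denotes the $j$-th projection. A short projection argument then shows $\langle P_i^{N_i} \rangle = N_i$ as well (for $j \in S_i$ the projection $\pi_j(\langle P_i^{N_i}\rangle)$ contains $\langle \pi_j(P_i)^{A_{d_j}} \rangle = A_{d_j}$, forcing the sub-product to be all of $N_i$). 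By the previous paragraph the PWIC holds for each $N_i$, so the pair $(N_i, P_i)$ is realizable: there is a connected $N_i$-Galois cover of $\mathbb{P}^1$ \'etale away from $x_i$ with $P_i$ an inertia group over $x_i$.

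Finally, since $\langle N_1, \ldots, N_r \rangle = \langle P_1^G, \ldots, P_r^G \rangle = G$, I would glue these $r$ covers by the formal patching argument of \cite[Theorem~7.4]{Das} (equivalently the Harbater--Stevenson/Raynaud patching behind \cite[Theorem~2.2.3]{Raynaud_AC}) to produce a single connected $G$-Galois cover of $\mathbb{P}^1$, \'etale away from $\{x_1, \ldots, x_r\}$, in which $P_i$ occurs as an inertia group over $x_i$ for each $i$. This is exactly the passage from the PWIC to the GPWIC already used in the odd-characteristic case, and it applies verbatim here.

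The step I expect to carry the real content is this last patching: one must verify that the glued cover is connected with Galois group precisely $G$ (which is where $G = \langle N_1, \ldots, N_r \rangle$ enters), that the prescribed inertia $P_i$ is preserved over $x_i$, and that the cover remains \'etale away from $B = \{x_1, \ldots, x_r\}$. Everything else — the inductive bootstrapping of the PWIC along products via Remark~\ref{rmk_realization_product}\eqref{item:3}, and the identification of $N_i$ as a sub-product with $\langle P_i^{N_i}\rangle = N_i$ — is routine. Since the patching mechanism is identical to the one invoked in Corollary~\ref{cor_GPWIC_Alt_products}, no new obstruction arises in characteristic $2$, and the corollary follows.
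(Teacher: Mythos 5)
Your proposal is correct and follows essentially the same route as the paper, which proves this corollary by combining the PWIC for each $A_{d_i}$ (Theorem~\ref{thm_PWIC_A_d_char_2}) with Remark~\ref{rmk_realization_product}~\eqref{item:3} and then invoking the patching argument of [Das, Theorem~7.4] to pass from the PWIC to the GPWIC. You have merely made explicit the details the paper leaves implicit (the identification of $\langle P_i^G \rangle$ as a sub-product with $\langle P_i^{N_i} \rangle = N_i$, and the final gluing), and these are carried out correctly.
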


\begin{remark}
The smallest case the above theorem does not include is $A_8$. When $\tau = (1, 2)(3, 4)(5, 6)(7, 8) \in A_8$, the realization of the potential purely wild inertia group $\langle \, \tau \, \rangle$ remains unknown. Note that when $4 | d$, $d \geq 5$, $r = d/2$, and $\tau = (1,2) \cdots (2r-1,r)$, proving that the pair $(A_d, \, \langle \, \tau \, \rangle)$ is realizable would imply that the PWIC holds for $A_d$ (with any $d \geq 5$) in characteristic $2$.
\end{remark}

\begin{theorem}\label{thm_PWIC_S_d_char_2}
In characteristic $2$, the PWIC is true for $S_2$, $S_3$, and $S_d$ for all odd integer $d \geq 5$.
\end{theorem}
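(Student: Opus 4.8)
The plan is to mirror the structure of Theorem~\ref{thm_PWIC_A_d_char_2}: reduce the realization of an arbitrary potential purely wild inertia group to that of a single cyclic group generated by an odd element, and then build that cyclic inertia from Frobenius-type building blocks by patching. First, $S_2$ and $S_3$ have order strictly divisible by $2$, so their Sylow $2$-subgroup $\langle\,(1,2)\,\rangle$ is the only candidate and is realizable by \cite[Corollary~2.2.2]{Raynaud_AC}; this settles those two cases. Now fix an odd $d \geq 5$ and a $2$-subgroup $P \subset S_d$ with $\langle\,P^{S_d}\,\rangle = S_d$. By Lemma~\ref{lem_wild_candidates}~\eqref{i:1}, $P$ contains an odd permutation $\tau$ with $\langle\,\langle\,\tau\,\rangle^{S_d}\,\rangle = S_d$, and $\tau$ has order $2^k$ for some $k \geq 1$ since $P$ is a $2$-group. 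As the odd elements of $\langle\,\tau\,\rangle$ are precisely its generators, no proper subgroup of $\langle\,\tau\,\rangle$ has full normal closure, so by \cite[Theorem~2]{2} it suffices to realize the single pair $(S_d,\langle\,\tau\,\rangle)$, after which that theorem upgrades $\langle\,\tau\,\rangle$ to $P$. In short, the theorem is equivalent to realizing $(S_d,\langle\,\tau\,\rangle)$ for every odd element $\tau \in S_d$ of $2$-power order.

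When $k=1$, $\tau$ is an odd involution, i.e. a product of an odd number $r$ of disjoint transpositions. Because $d$ is odd we have $2r \leq d-1$, so $\tau$ has a fixed point and is conjugate to $(1,2)\cdots(2r-1,2r)$ with $2r<d$; Proposition~\ref{prop_cyclic_order_two_inertia}~\eqref{Sym} then realizes $(S_d,\langle\,\tau\,\rangle)$ directly. This handles every relevant $P$ that contains an odd involution, which (for instance in $S_5$) is every candidate except the cyclic group generated by a $4$-cycle.

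The main case, and the heart of the argument, is $k \geq 2$, where $\langle\,\tau\,\rangle \cong \mathbb{Z}/2^k$ contains no odd involution and Proposition~\ref{prop_cyclic_order_two_inertia} does not apply. Here I would imitate the construction of that proposition, replacing the order-$6$ groups $\langle\,\sigma_{u,b}\,\rangle \rtimes \langle\,\tau\,\rangle$ by Frobenius quasi-$2$ groups in which $\langle\,\tau\,\rangle$ is a Sylow $2$-subgroup. For a $4$-cycle the model is the order-$20$ group $\mathbb{Z}/5 \rtimes \mathbb{Z}/4 \subset S_5$ (the affine group of $\mathbb{F}_5$), in which multiplication by a primitive root is a $4$-cycle and generates a Sylow $2$-subgroup; more generally one would use $\mathbb{Z}/q \rtimes \langle\,\tau\,\rangle$ for an odd prime $q$ on which $\tau$ acts freely. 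Each such block $H$ is a quasi-$2$ group whose Sylow $2$-subgroup is realized as inertia by \cite[Theorem~2]{2}, and I would patch several of them via \cite[Theorem~2.2.3]{Raynaud_AC}; as in Proposition~\ref{prop_cyclic_order_two_inertia}, Jordan's Theorem \cite{Jones} should force the resulting primitive group to contain $A_d$ and hence, $\tau$ being odd, to equal $S_d$, yielding $(S_d,\langle\,\tau\,\rangle)$.

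The hard part will be making this last step actually reach $S_d$, and this is where I expect the real obstacle to lie. The difficulty is sharpest for small $d$: in $S_5$ the group $\mathbb{Z}/5 \rtimes \mathbb{Z}/4$ is maximal and is the unique quasi-$2$ subgroup having the prescribed $4$-cycle as a Sylow subgroup, so patching blocks with this common inertia cannot escape it; realizing $(S_5,\mathbb{Z}/4)$ then seems to require either combining it with an auxiliary cover realizing $\mathbb{Z}/4$ as a non-Sylow inertia group or producing the cover by a direct Abhyankar-type construction. A second obstacle is that the single-cycle Frobenius models require $2^k+1$ to be prime, while a Sylow subgroup built from a prime $q$ with $2^k \mid q-1$ appears in $S_d$ as a product of several $2^k$-cycles rather than the single $2^k$-cycle we need; thus larger cyclic inertia groups (such as $\mathbb{Z}/8$ generated by an $8$-cycle) and odd $2$-elements that are not single cycles demand more elaborate building blocks with exactly the right cyclic Sylow subgroup. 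Realizing these cyclic purely wild inertia groups uniformly is the crux on which the theorem rests.
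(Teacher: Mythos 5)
Your reduction to realizing the single pair $(S_d, \, \langle \, \tau \, \rangle)$ for an odd permutation $\tau$ of $2$-power order (via Lemma~\ref{lem_wild_candidates}~\eqref{i:1} and \cite[Theorem~2]{2}), your handling of $S_2$ and $S_3$, and your treatment of odd involutions through Proposition~\ref{prop_cyclic_order_two_inertia}~\eqref{Sym} all coincide with the paper's argument. But in the main case $\mathrm{ord}(\tau) \geq 4$ you do not have a proof: you propose patching Frobenius blocks $\mathbb{Z}/q \rtimes \langle \, \tau \, \rangle$ in which $\langle \, \tau \, \rangle$ is a Sylow $2$-subgroup, and you then correctly identify why this fails --- for a $4$-cycle in $S_5$ the unique such block $\mathbb{Z}/5 \rtimes \mathbb{Z}/4$ is a maximal quasi-$2$ subgroup that patching cannot escape, and for larger cyclic inertia the single-cycle models need $2^k+1$ prime. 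What you label ``the crux on which the theorem rests'' is exactly the step left unproved, so this is a genuine gap rather than an alternative route.

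The paper closes this case with a short argument that needs no building blocks having $\langle \, \tau \, \rangle$ as a Sylow subgroup: square $\tau$. Since $\mathrm{ord}(\tau) \geq 4$, the element $\tau^2$ is a non-trivial \emph{even} permutation, and simplicity of $A_d$ gives $\bigl\langle \, \langle \, \tau^2 \, \rangle^{A_d} \, \bigr\rangle = A_d$; since $d$ is odd, $4 \nmid d$, so the already-established Theorem~\ref{thm_PWIC_A_d_char_2} realizes the pair $(A_d, \, \langle \, \tau^2 \, \rangle)$. The pair $(\langle \, \tau \, \rangle, \, \langle \, \tau \, \rangle)$ is realizable because the PWIC holds for $2$-groups. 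Raynaud's patching theorem \cite[Theorem~2.2.3]{Raynaud_AC} then glues these two covers along the containment $\langle \, \tau^2 \, \rangle \subset \langle \, \tau \, \rangle$ and realizes $(\langle \, A_d, \tau \, \rangle, \, \langle \, \tau \, \rangle) = (S_d, \, \langle \, \tau \, \rangle)$, the equality holding because $\tau$ is odd. Your instinct that realizing $(S_5, \, \mathbb{Z}/4)$ requires ``an auxiliary cover realizing $\mathbb{Z}/4$ as a non-Sylow inertia group'' was pointing in the right direction, but the resolution is the reverse of what you tried: instead of manufacturing $\langle \, \tau \, \rangle$ as a Sylow $2$-subgroup of small proper subgroups, one takes as a block the full $A_d$-cover with the \emph{smaller} purely wild inertia $\langle \, \tau^2 \, \rangle$ --- this is precisely why the paper proves the $A_d$ case (for $4 \nmid d$) before the $S_d$ case.
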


\begin{proof}
By \cite[Corollary~2.2.2]{Raynaud_AC}, the PWIC in characteristic $2$ is true for the groups $S_2$ and $S_3$. Let $d \geq 2$ be an odd integer. Let $P \subset S_d$ be a $2$-subgroup such that $\left\langle \, P^{\, S_d} \, \right\rangle = S_d$. Using Lemma~\ref{lem_wild_candidates}~\eqref{i:1}, choose an odd permutation $\tau \in P$ such that $\left\langle \, \langle \, \tau \, \rangle^{\, S_d} \, \right\rangle = S_d$. By \cite[Theorem~2]{2}, it is enough to show that the pair $(S_d, \, \langle \, \tau \, \rangle)$ is realizable. Suppose that $\text{ord}(\tau) = 2^f$, $f \geq 1$.

If $\tau$ has order $2$, then $\tau = \tau_1 \cdots \tau_r$ for disjoint transpositions $\tau_u$'s, for some odd integer $r \leq \floor{d/2}$. As $d$ is an odd integer, $2r < d$. Moreover, $\tau$ is conjugate to the element $(1,2) \cdots (2r-1,2r)$. By Proposition~\ref{prop_cyclic_order_two_inertia}~\eqref{i:1}, the pair $(S_d, \, \langle \, \tau \, \rangle)$ is realizable.

Let $\text{ord}(\tau) \geq 4$. Then $\tau^2$ is a non-trivial even permutation. By Theorem~\ref{thm_PWIC_A_d_char_2}, the pair $(A_d, \, \langle \, \tau^2 \, \rangle)$ is realizable. Also the pair $(\langle \, \tau \, \rangle, \, \langle \, \tau \, \rangle)$ is realizable as the PWIC holds for any $2$-group. By \cite[Theorem~2.2.3]{Raynaud_AC}, the pair $(S_d = \langle \, A_d, \langle \, \tau \, \rangle \, \rangle, \, \langle \, \tau \, \rangle)$ is realizable.
\end{proof}

\begin{example}[{Failure of Abhyankar-style equation to the PWIC for $S_4$ in characteristic $2$}]
The smallest case not covered in the above result is $d = 4$. One can construct the connected $A_d$-Galois and $S_d$-Galois covers using covers $\mathbb{P}^1 \longrightarrow \mathbb{P}^1$ given by explicit affine equations, following Abhyankar's work or \cite[Section~3]{Das}. It is not clear whether we can use this method to obtain a connected $S_4$-Galois cover \'{e}tale cover of $\mathbb{A}^1$ such that $\langle \, (1,2,3,4) \, \rangle$ occurs as an inertia group above $\infty$. We illustrate one such example in the following.

Let $\psi \, \colon \, Y = \mathbb{P}^1_y \longrightarrow \mathbb{P}^1_x$ be a degree-$4$ cover given by the affine equation $f(x,y) = 0$, where
$$f(x,y) = y^4 - y^3 +x.$$
Since $(0,0)$ is the only common zero of $f$ and its $y$-derivative $\frac{\partial f}{\partial y}$, the cover $\psi$ is \'{e}tale away from $\{0, \infty\}$. From the equation, it follows that there are two points $(y=0)$ and $(y=1)$ above $x=0$ in $\psi$ having ramification indices $3$ and $1$, respectively, and $\psi^{-1}(\infty)$ consists of the unique point $(y= \infty)$ having ramification index $4$. Let $\phi \, \colon \, Z \longrightarrow \mathbb{P}^1_x$ be the Galois closure of $\psi$, and let $G$ be the Galois group. Since $f(x,y)$ is an irreducible polynomial in $k(x)[y]$, it follows that $G$ is a transitive subgroup of $S_4$. It can also be shown that $\phi$ is \'{e}tale away from $\{0,\infty\}$, the inertia groups above $0$ are the cyclic groups generated by the $3$-cycles in $G$; moreover, if $I_\infty \subset G$ occurs as an inertia groups above $\infty$, a $4$-cycle $\tau$ is contained in $I_\infty$. From this it follows that $G = S_4$. Note that $N_{S_4}(\langle \, \tau \, \rangle) \cong \langle \, (1,2,3,4), (1,3) \, \rangle$ is the Dihedral group $D_8$ of order $8$ (which is also a Sylow $2$-subgroup of $S_4$). So $I_\infty = p(I_\infty)$ is either a cyclic group of order $4$ or is a Sylow $2$-group. If $I_\infty \cong \mathbb{Z}/4$ (generated by $\tau$), the induced cover $Z \longrightarrow Y = \mathbb{P}^1_y$ is a connected $S_4$-Galois cover, tamely ramified over $(y=1)$, \'{e}tale everywhere else. By the Riemann Hurwitz formula, such a cover cannot exist. So the cover $\phi$ has the Sylow $2$-group as the inertia groups above $\infty$.
\end{example}

Now we establish the GPWIC (Conjecture~\ref{conj_GPWIC}) in characteristic $2$ for any product of the Alternating and the Symmetric groups we encountered above. Since $S_d$ is not a perfect group, Remark~\ref{rmk_realization_product}~\eqref{item:3} do not apply; instead, we apply patching results by Harbater and Raynaud together with Theorem~\ref{thm_PWIC_A_d_char_2}. The proof is broken down into three steps. After a simple group theoretic observation in Step 1 that the potential purely wild inertia groups contain certain elements $g^{(i)}$'s, we proceed to construct simpler realizable pairs in Step 2 with inertia groups $\langle \, g^{(i)} \, \rangle$. We `patch' together these covers in Step 3 to obtain the result.

\begin{theorem}\label{thm_product_S_d_char_2}
The GPWIC in characteristic $2$ is true for any product $G = S_{d_1} \times \cdots \times S_{d_n}$, $n \geq 1$, where each $d_i \geq 5$ is an odd integer.
\end{theorem}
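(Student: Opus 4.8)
The plan is to execute the three-step strategy announced above: realize the perfect part $\prod_i A_{d_i}$ using Corollary~\ref{cor_GPWIC_product_A_d_char_2}, realize cyclic covers carrying the odd permutations that generate the sign quotients, and patch. Fix the data: $2$-subgroups $P_1, \ldots, P_r \subset G = \prod_{i=1}^n S_{d_i}$ with $\langle P_1^G, \ldots, P_r^G \rangle = G$ and points $x_1, \ldots, x_r \in \mathbb{P}^1$. Since $[S_{d_i}, S_{d_i}] = A_{d_i}$, the derived subgroup of $G$ is $\prod_i A_{d_i}$ and $G/\prod_i A_{d_i} \cong (\mathbb{Z}/2)^n$ via the sign maps. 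Reducing the hypothesis $\langle P_1^G, \ldots, P_r^G\rangle = G$ modulo $\prod_i A_{d_i}$ shows that the images of $\bigcup_j P_j$ generate $(\mathbb{Z}/2)^n$. I would then choose elements $g^{(1)}, \ldots, g^{(s)}$, each lying in some $P_j$ and of $2$-power order, whose images form a basis of $(\mathbb{Z}/2)^n$; by construction $\langle \prod_i A_{d_i}, \, g^{(1)}, \ldots, g^{(s)}\rangle = G$. By \cite[Theorem~2]{2} it will be enough to realize the cyclic groups $\langle g^{(l)}\rangle$ (at the branch points carrying them) together with a suitable even inertia, and then to enlarge the realized inertia at each $x_j$ back up to the prescribed $P_j$.

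\textbf{Step 2 (building blocks).} Because each $d_i \geq 5$ is odd we have $4 \nmid d_i$, so Theorem~\ref{thm_PWIC_A_d_char_2} gives the PWIC for every $A_{d_i}$, and since each $A_{d_i}$ is perfect, Corollary~\ref{cor_GPWIC_product_A_d_char_2} (via Remark~\ref{rmk_realization_product}\eqref{item:3}) yields the GPWIC for $\prod_i A_{d_i}$. I would use it to produce a connected $\prod_i A_{d_i}$-Galois cover of $\mathbb{P}^1$, \'{e}tale away from $\{x_1, \ldots, x_r\}$, realizing a choice of even inertia groups whose conjugates generate $\prod_i A_{d_i}$ (this generation is routine from the simplicity of each $A_{d_i}$, taking for instance the squares $(g^{(l)})^2$ together with the even parts $P_j \cap \prod_i A_{d_i}$, and adjoining nontrivial even elements factor by factor if needed). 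Separately, for each odd generator $g^{(l)}$ of order $2$ I would realize the cyclic pair directly as in Proposition~\ref{prop_cyclic_order_two_inertia}, and for each $g^{(l)}$ of order $\geq 4$ as in the corresponding case of Theorem~\ref{thm_PWIC_S_d_char_2}, using that the PWIC holds for any $2$-group.

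\textbf{Step 3 (patching and conclusion).} I would then patch the perfect-part cover against the cyclic covers of Step~2, mimicking the passage from $A_d$ to $S_d$ in the proof of Theorem~\ref{thm_PWIC_S_d_char_2}: Raynaud's formal patching result \cite[Theorem~2.2.3]{Raynaud_AC} combines the local pieces at each branch point, and Harbater's patching assembles a single connected cover over $\mathbb{P}^1$ with the prescribed branch configuration. Since $\langle \prod_i A_{d_i}, \, g^{(1)}, \ldots, g^{(s)}\rangle = G$, the patched cover is $G$-Galois and \'{e}tale away from $\{x_1, \ldots, x_r\}$, with the even and cyclic building blocks as inertia above the $x_j$. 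A final application of \cite[Theorem~2]{2} at each branch point enlarges the realized inertia to the prescribed $P_j$, completing the proof.

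\textbf{Main obstacle.} The difficulty is exactly the non-perfectness of the factors: each $S_{d_i}$ has the sign quotient $\mathbb{Z}/2$, so $G$ carries the common quotient $(\mathbb{Z}/2)^n$ shared among its factors, and Remark~\ref{rmk_realization_product}\eqref{item:3} — which requires perfect factors with no common quotient — cannot be invoked. A naive compositum of $S_{d_i}$-covers would collapse onto a proper diagonal subgroup with smaller image in $(\mathbb{Z}/2)^n$. The crux is therefore the choice and threading of the odd generators $g^{(l)}$ so that their sign-images span $(\mathbb{Z}/2)^n$ and the patched local inertia-realizing subgroups generate the full product $G$ rather than a subgroup surjecting onto a proper subspace; verifying this spanning condition together with the connectivity of the patched cover at each stage is where the argument demands the most care.
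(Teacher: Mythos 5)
Your overall skeleton (odd $2$-power generators threading the sign quotient $(\mathbb{Z}/2)^n$, cyclic building blocks, patching, then enlarging inertia via \cite[Theorem~2]{2}) matches the paper, but your Step~2 contains a genuine gap. You propose a single global $\prod_i A_{d_i}$-Galois cover whose even inertia groups are drawn from the prescribed groups (``the squares $(g^{(l)})^2$ together with the even parts $P_j \cap \prod_i A_{d_i}$''), and you call the generation of $\prod_i A_{d_i}$ by their conjugates ``routine''. It is false in general: take $G = S_5 \times S_5$, $r = 2$, $P_1 = \langle \, ((1,2),1) \, \rangle$, $P_2 = \langle \, (1,(1,2)) \, \rangle$. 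The hypothesis $\langle \, P_1^G, P_2^G \, \rangle = G$ holds, yet every $P_j \cap \prod_i A_{d_i}$ is trivial and every square is trivial, so there is no even inertia available at all. Your fallback --- ``adjoining nontrivial even elements factor by factor if needed'' --- is not permissible: any adjoined even element must occur inside an inertia group above some $x_j$, and the patched inertia there would then strictly exceed the prescribed $P_j$; the GPWIC demands $P_j$ exactly, and there is no general device for shrinking wild inertia afterwards. The paper's proof never builds a global alternating-part cover; instead, the even elements enter only the \emph{Galois group}, never the inertia. For each generator $g^{(i)}$ with a distinguished odd coordinate $\lambda(i)$, the order-$2$ case patches order-$6$ pieces $\langle \, \gamma'_{t,b} \, \rangle \rtimes \langle \, g^{(i)} \, \rangle$ (three-cycles placed in the single coordinate $\lambda(i)$, semidirect with the full product element $g^{(i)}$) via \cite[Corollary~2.2.2, Theorem~2.2.3]{Raynaud_AC} to obtain pairs $(B_i, \langle \, g^{(i)} \, \rangle)$, and the order-$\geq 4$ case patches $\left(K_i, \langle \, (g^{(i)})^2 \, \rangle\right)$ with $K_i = \prod_{v \in \mathrm{Supp}((g^{(i)})^2)} A_{d_v}$ against $(\langle \, g^{(i)} \, \rangle, \langle \, g^{(i)} \, \rangle)$ to obtain $(H_i, \langle \, g^{(i)} \, \rangle)$: the group grows to have full image $S_{d_{\lambda(i)}}$ while the inertia remains the cyclic group $\langle \, g^{(i)} \, \rangle$. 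This mechanism is exactly what your architecture is missing.

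Relatedly, your appeal to Proposition~\ref{prop_cyclic_order_two_inertia} and Theorem~\ref{thm_PWIC_S_d_char_2} ``as in'' for the odd generators glosses over the real technical content: $g^{(l)}$ is an element of the product, typically with support in several coordinates, and those are single-factor statements. Generalizing them --- one distinguished odd coordinate per generator, with the distinguished coordinates covering all $n$ factors, which your basis-of-$(\mathbb{Z}/2)^n$ choice does not automatically furnish without a further argument, whereas the paper instead chooses $P'_j \subseteq P_j$ with $\pi(P'_j) = \pi(P_j)$ for \emph{every} $j$ --- constitutes most of the paper's Steps~1--2. (Note also that the oddness of $d_i$ is used precisely to make the range of the auxiliary points $b$ nonempty in the order-$2$ construction, a point your write-up never isolates.) Finally, your enlargement ``at each branch point'' requires $P_j$ to be contained in the group realized there, and your multi-point assembly needs a precise tool: the paper applies \cite[Theorem~2]{2} within each single-point pair $(G_j, P'_j)$ and then combines the pairs $(G_j, P_j)$, one group per branch point, inductively via \cite[Theorem~4.7]{Das}, rather than performing one global patch followed by enlargement. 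None of this is fatal to the strategy, but as written your proposal fails to produce the prescribed inertia groups already in the example above.
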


\begin{proof}
We write any element of $G$ as $g = (g_1, \cdots, g_n)$. For $g \in G$, set $\text{Supp}(g) \coloneqq \{1 \leq i \leq n \, | \, g_i \neq 1\}$. Let $r \geq 1$ be an integer, $B = \{x_1, \ldots, x_r\} \subset \mathbb{P}^1$ be a set of closed points, and $P_1, \ldots, P_r \subset G$ be $2$-groups such that $G = \left\langle \, P_1^G, \ldots, P_r^G \, \right\rangle$ where $P_j^G$ denote the set of conjugates of $P_j$ in $G$. We assert that there is a connected $G$-Galois cover of $\mathbb{P}^1$ that is \'{e}tale away from $B$, and $P_j$ occurs as an inertia group above $x_j$ for $1 \leq j \leq r$. Since the inertia groups above a point in a connected $G$-Galois cover are conjugate, we assume that $P_1, \ldots, P_r$ are contained in a single Sylow $2$-subgroup of $G$.

\underline{Step 1:} We start with a group theoretic observation that each $2$-group $P_j$ contains a product $P_j'$ of cyclic groups, and the conjugates of all these $P_j'$'s generate $G$.

There is a natural projection map $\pi \, \colon \, G \longrightarrow Q \coloneqq \prod_{1 \leq i \leq n} \mathbb{Z}/2$. Since $G$ is generated by the conjugates of $P_j$'s, $Q$ is generated by the conjugates of $\pi(P_j)$'s. As $Q$ is abelian, we have $\left\langle \, \pi(P_1), \ldots, \pi(P_r) \, \right\rangle = Q$. If $\pi(P_{j_0}) = \{1\}$ for some $j_0$, we have $\left\langle \, P_j^G \, | \, 1 \leq j \leq r, \, j \neq j_0 \, \right\rangle = G$. In view of \cite[Theorem~4.7]{Das}, it is enough to consider that $\pi(P_j) \neq \{1\}$ for any $1 \leq j \leq r$. Since each $\pi(P_j) \subset Q$ is a non-trivial elementary abelian $2$-group, there are integers $0 = t_0 < t_1 < \cdots < t_{r-1} < t_r$ such that the following hold.
\begin{enumerate}
\item For each $1 \leq j \leq r$, there is a subgroup $P'_j \coloneqq \left\langle \, g^{(t_{j-1}+1)} \, \right\rangle \times \cdots \times \left\langle \, g^{(t_j)} \, \right\rangle \subset P_j$ such that $\pi(P'_j) = \pi(P_j)$;
\item for each $1 \leq i \leq t_r$, there exists an integer $\lambda(i)$ such that $g^{(i)}_{\lambda(i)} \in S_{\lambda(i)}$ is an odd permutation;
\item $\cup \{ \lambda(i) \, | \, 1 \leq i \leq t_r \} = \{1, \cdots, n\}$, i.e., conjugates of $\left\langle \, P'_j \, | \, 1 \leq j \leq r \, \right\rangle$ generate $G$.
\end{enumerate}

\underline{Step 2:} Let $1 \leq j \leq r$, and $t_{j-1}+1 \leq i \leq t_j$. We use formal patching techniques to realize $\left\langle \, g^{(i)} \, \right\rangle$ as the inertia groups for certain quasi $2$-subgroups of $G$.

First suppose that $g^{(i)}_{\lambda(i)}$ has order $\geq 4$. Set $h^{(i)} \coloneqq (g^{(i)})^2$. Then for any $v \in \text{Supp}\left( h^{(i)} \right)$, \, $h^{(i)}_v \in S_{d_v}$ is a non-trivial even permutation. Set $K_i \coloneqq \prod_{v \in \text{Supp}\left( h^{(i)} \right)} A_{d_v} \subset G$. By Theorem~\ref{thm_PWIC_A_d_char_2}, the pair $\left(K_i, \left\langle \, h^{(i)} \, \right\rangle \right)$ is realizable. As the pair $\left(\left\langle \, g^{(i)} \, \right\rangle, \, \left\langle \, g^{(i)} \, \right\rangle\right)$ is also realizable, by \cite[Theorem~2.2.3]{Raynaud_AC}, we conclude that the pair
\begin{equation}\label{eq_pair_1}
\left(H_i \coloneqq \left\langle \, K_i, \left\langle \, g^{(i)} \, \right\rangle \, \right\rangle, \, \left\langle \, g^{(i)} \, \right\rangle\right) \, \text{ is realizable}.
\end{equation}
Since $g^{(i)}_{\lambda(i)}$ is an odd permutation, the image of $H_i$ under the $\lambda(i)^{\text{th}}$ projection $G \twoheadrightarrow S_{d_{\lambda(i)}}$ is $S_{d_{\lambda(i)}}$.

Now let $\text{ord}\left( g^{(i)}_{\lambda(i)} \right) = 2$. Then $g^{(i)}_{\lambda(i)}$ is conjugate to the element $\tau \coloneqq (1,2) \cdots (2u-1, 2u)$ in $S_{d_{\lambda(i)}}$ for some odd integer $1 \leq u < d/2$, and let $w \in S_{d_{\lambda(i)}}$ be such that $g^{(i)}_{\lambda(i)} = w^{-1} \tau w$. For each $1 \leq t \leq u$ and $2u+1 \leq b \leq d_{\lambda(i)}$, consider the $3$-cycle $\sigma_{t,b} \coloneqq (2t-1,2t,b) \in S_{d_{\lambda(i)}}$, and let $\gamma_{t,b} \in H_i$ be such that $(\gamma_{t,b})_{\lambda(i)} = \sigma_{t,b}$, $(\gamma_{t,b})_{s} = 1$ for $s \neq t$. As in the proof of Proposition~\ref{prop_cyclic_order_two_inertia}, we have $\tau^{-1} \gamma_{t,b} \tau = \gamma_{t,b}^2$. Hence for each $t$ and $b$, setting $\gamma_{t,b}' \coloneqq w^{-1} \gamma_{t,b} w$, we have
$$\left( g^{(i)} \right)^{-1} \gamma_{t,b}' g^{(i)} = \left( \gamma_{t,b}' \right)^2.$$
By \cite[Corollary~2.2.2]{Raynaud_AC}, the pairs $\left( \left\langle \, \gamma_{t,b}' \, \right\rangle \rtimes \left\langle \, g^{(i)} \, \right\rangle, \, \left\langle \, g^{(i)} \, \right\rangle \right)$ are realizable for each $1 \leq t \leq u$ and $2u+1 \leq b \leq d_{\lambda(i)}$. By \cite[Theorem~2.2.3]{Raynaud_AC}, we conclude that the pair
\begin{equation}\label{eq_pair_2}
\left( B_i \coloneqq \left\langle \, \left\langle \, \gamma_{t,b}' \, \right\rangle \rtimes \left\langle \, g^{(i)} \, \right\rangle \, | \, 1 \leq t \leq u, \, 2u+1 \leq b \leq d_{\lambda(i)} \, \right\rangle , \, \left\langle \, g^{(i)} \, \right\rangle \right) \, \text{ is realizable}.
\end{equation}
The image of $B_i$ under the projection $G \twoheadrightarrow S_{d_{\lambda(i)}}$ is $S_{d_{\lambda(i)}}$.

\underline{Step 3:} For $1 \leq j \leq r$, let $G_j$ be the subgroup of $G$ generated by $H_i$'s ($\text{ord}\left( g^{(i)}_{\lambda(i)} \right) \geq 4$) and $B_l$'s ($\text{ord}\left( g^{(l)}_{\lambda(l)} \right) = 2$), $t_{j-1}+1 \leq i, l \leq t_j$. Since $G_j \twoheadrightarrow \prod_{t_{j-1}+1 \leq i \leq t_j} S_{d_{\lambda(i)}}$ by construction, $\left\langle \, G_j \, | \, 1 \leq j \leq r \right\rangle = G$.

Applying \cite[Theorem~2.2.3]{Raynaud_AC} to the realizable pairs in~\eqref{eq_pair_1} and \eqref{eq_pair_2}, for each $1\leq j \leq r$, the pair $(G_j, \, P'_j)$ is realizable; further applying \cite[Theorem~2]{2}, the pair $(G_j, \, P_j)$ is realizable. Finally, we apply \cite[Theorem~4.7]{Das} inductively to obtain our required $G$-Galois cover.
\end{proof}

\begin{corollary}\label{cor_GPWIC_arbit_product_char_2}
The GPWIC (Conjecture~\ref{conj_GPWIC}) is true in characteristic $2$ for any product $G = G_1 \times \cdots \times G_n$, $n \geq 1$, where $G_i = A_{d_i}$ for some $d_i \geq 5$ with $4 \nmid d_i$ or $G_i = S_{d_i}$ for an odd integer $d_i \geq 5$.
\end{corollary}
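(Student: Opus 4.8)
The plan is to run the three-step argument of Theorem~\ref{thm_product_S_d_char_2} in the mixed setting, since every tool invoked there is available for both types of factor. Write $G = G_1 \times \cdots \times G_n$ and, for $g \in G$, $g = (g_1, \ldots, g_n)$ with $g_i \in G_i$; let $\mathcal{S} = \{i : G_i = S_{d_i}\}$ and $\mathcal{A} = \{i : G_i = A_{d_i}\}$. Fix a set $B = \{x_1, \ldots, x_r\}$ of closed points of $\mathbb{P}^1$ and $2$-groups $P_1, \ldots, P_r \subset G$ with $G = \langle \, P_1^G, \ldots, P_r^G \, \rangle$, which after conjugation I place inside a common Sylow $2$-subgroup. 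The goal is a connected $G$-Galois cover, \'{e}tale away from $B$, realizing $P_j$ above $x_j$ for each $j$.

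First I would carry out the group-theoretic reduction of Step~1. The only place the symmetric structure entered there was through the sign projection onto $\prod_{1 \leq i \leq n} \mathbb{Z}/2$; in the mixed case I replace this by the projection $\pi \colon G \twoheadrightarrow Q' \coloneqq \prod_{i \in \mathcal{S}} \mathbb{Z}/2$ onto the symmetric factors only, as each $A_{d_i}$ is perfect and contributes nothing to the maximal abelian $2$-quotient. As before, the generation hypothesis gives $\langle \, \pi(P_1), \ldots, \pi(P_r) \, \rangle = Q'$, which lets me extract inside each $P_j$ a product of cyclic groups $P_j' = \langle \, g^{(t_{j-1}+1)} \, \rangle \times \cdots \times \langle \, g^{(t_j)} \, \rangle$ whose generators carry an odd permutation in a distinguished symmetric coordinate. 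The alternating coordinates are invisible to $\pi$, so they must be treated separately: since each $A_{d_i}$ is simple, the hypothesis $G = \langle \, P_j^G \, \rangle$ forces some $P_j$ to project nontrivially onto the $i$-th factor for every $i \in \mathcal{A}$, and by Lemma~\ref{lem_wild_candidates}~\eqref{i:2} such a projection contains an order-$2$ even permutation whose $A_{d_i}$-conjugates generate $A_{d_i}$. Enlarging the $P_j'$ to include such generators, I obtain distinguished coordinates $\lambda(i)$ covering all of $\{1, \ldots, n\}$, with an odd permutation at each symmetric $\lambda(i)$ and a nontrivial even involution at each alternating $\lambda(i)$.

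In Step~2 I realize each $\langle \, g^{(i)} \, \rangle$ as an inertia group for a quasi $2$-subgroup surjecting onto the relevant factor. For a symmetric distinguished coordinate the two cases $\mathrm{ord}(g^{(i)}_{\lambda(i)}) \geq 4$ and $\mathrm{ord}(g^{(i)}_{\lambda(i)}) = 2$ are handled exactly as in Theorem~\ref{thm_product_S_d_char_2}, using Theorem~\ref{thm_PWIC_A_d_char_2}, the $3$-cycle computation of Proposition~\ref{prop_cyclic_order_two_inertia}, and \cite[Corollary~2.2.2]{Raynaud_AC}, \cite[Theorem~2.2.3]{Raynaud_AC}. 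For an alternating distinguished coordinate, Theorem~\ref{thm_PWIC_A_d_char_2} applies directly (note $4 \nmid d_i$) to realize the even involution, after which the corresponding subgroup surjects onto $A_{d_{\lambda(i)}}$. Step~3 is then verbatim: I assemble the subgroups attached to each $j$, note that they jointly surject onto all factors so that their join is $G$, patch the realizable pairs from Step~2 with \cite[Theorem~2.2.3]{Raynaud_AC} and \cite[Theorem~2]{2}, and finish by an inductive application of \cite[Theorem~4.7]{Das} to the $r$ branch points.

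The main obstacle is the bookkeeping in Step~1: because the projection $\pi$ detects only the symmetric factors, the single generation hypothesis $G = \langle \, P_j^G \, \rangle$ has to be unpacked into two different statements — an abelian-quotient condition governing the symmetric coordinates and a simplicity-driven surjectivity governing the alternating coordinates — and the two families of generators must be combined so that the distinguished coordinates still cover every index. Once this combinatorial reduction is in place, Steps~2 and~3 are routine adaptations of the symmetric case and of Corollary~\ref{cor_GPWIC_product_A_d_char_2}.
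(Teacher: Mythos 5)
Your overall architecture (re-running the three-step patching argument of Theorem~\ref{thm_product_S_d_char_2} in the mixed setting) is viable, but Step~2 as you state it for the alternating distinguished coordinates has a genuine gap. What the assembly in Step~3 requires is a realizable pair $\bigl(H_i, \, \langle \, g^{(i)} \, \rangle\bigr)$ whose inertia is the cyclic group generated by the \emph{global} element $g^{(i)} \in P_j \subset G$; Theorem~\ref{thm_PWIC_A_d_char_2} instead realizes pairs of the form $\bigl(A_{d_{\lambda(i)}}, \, \langle \, \tau \, \rangle\bigr)$, where $\tau = g^{(i)}_{\lambda(i)}$ is the coordinate involution. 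These are not the same: $g^{(i)}$ generally has nontrivial components in the other coordinates (e.g.\ odd permutations at symmetric coordinates), so $\langle \, \tau \, \rangle \neq \langle \, g^{(i)} \, \rangle$ and the coordinate-embedded $\tau$ need not lie in $P_j$ at all. If you patch the $A_{d_{\lambda(i)}}$-cover with inertia $\langle \, \tau \, \rangle$ against the $\langle \, g^{(i)} \, \rangle$-cover via \cite[Theorem~2.2.3]{Raynaud_AC}, the inertia of the result is $\langle \, \tau, g^{(i)} \, \rangle$, which can be strictly larger than $\langle \, g^{(i)} \, \rangle$ and need not be contained in $P_j$; since the GPWIC demands $P_j$ \emph{exactly}, and \cite[Theorem~2]{2} only enlarges a realized inertia to a bigger $2$-group containing it, this poisons the final step. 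Nor can you rescue it with the paper's squaring trick $h^{(i)} = \bigl(g^{(i)}\bigr)^2$: by your own choice $g^{(i)}_{\lambda(i)}$ is an involution, so $h^{(i)}$ is trivial in the distinguished coordinate, $\lambda(i) \notin \mathrm{Supp}\bigl(h^{(i)}\bigr)$, and the group $K_i$ misses $A_{d_{\lambda(i)}}$ entirely.

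The repair stays inside your framework and inside the paper's toolkit: at an alternating distinguished coordinate, run the $B_i$-construction from Step~2 of Theorem~\ref{thm_product_S_d_char_2} verbatim rather than citing Theorem~\ref{thm_PWIC_A_d_char_2}. The even involution $\tau = g^{(i)}_{\lambda(i)}$ is a product of an even number of disjoint transpositions, and because $4 \nmid d_{\lambda(i)}$ it has a fixed point; so the $3$-cycles $\sigma_{t,b}$ supported solely at coordinate $\lambda(i)$ exist, are even (hence lie in the $A_{d_{\lambda(i)}}$-factor), and are inverted by conjugation by the full element $g^{(i)}$, since its other coordinates have disjoint support. Then \cite[Corollary~2.2.2]{Raynaud_AC} realizes each $\bigl(\langle \, \gamma'_{t,b} \, \rangle \rtimes \langle \, g^{(i)} \, \rangle, \, \langle \, g^{(i)} \, \rangle\bigr)$ and \cite[Theorem~2.2.3]{Raynaud_AC} assembles a pair $\bigl(B_i, \, \langle \, g^{(i)} \, \rangle\bigr)$ whose $\lambda(i)$-projection contains $A_{d_{\lambda(i)}}$ by the Jordan-type argument of Proposition~\ref{prop_cyclic_order_two_inertia} --- precisely the input Step~3 needs, with the correct inertia. (This is also where $4 \nmid d_i$ genuinely enters; your parenthetical attributes it to Theorem~\ref{thm_PWIC_A_d_char_2} instead.) You should also address the case $\pi(P_{j_0}) = \{1\}$: unlike the pure symmetric case, such a $P_{j_0}$ cannot be discarded, but it lies in the product of the alternating parts, where Corollary~\ref{cor_GPWIC_product_A_d_char_2} realizes it directly and \cite[Theorem~4.7]{Das} folds it back in. Finally, note that the paper's own proof is a two-line deduction with no new patching: a product of alternating groups is perfect, a nontrivial common quotient with a product of symmetric groups would have to be perfect, and $\prod_i S_{d_i}$ has no nontrivial perfect quotient; hence \cite[Theorem~7.5]{Das} glues the alternating part (Corollary~\ref{cor_GPWIC_product_A_d_char_2}) to the symmetric part (Theorem~\ref{thm_PWIC_S_d_char_2} via Theorem~\ref{thm_product_S_d_char_2}). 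Your repaired argument is a legitimate alternative, but it re-derives combinatorics the paper has already packaged.
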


\begin{proof}
This follows immediately from Corollary~\ref{cor_GPWIC_product_A_d_char_2}, Theorem~\ref{thm_PWIC_S_d_char_2} and \cite[Theorem~7.5]{Das}.
\end{proof}

\bibliographystyle{plainnat}

\end{document}